\newtheorem{theorem}{Theorem}[section]
\newtheorem{lemma}[theorem]{Lemma}
\newtheorem{proposition}[theorem]{Proposition}
\theoremstyle{remark}
\newtheorem{remark}[theorem]{Remark}
\newtheorem{definition}[theorem]{Definition}
\numberwithin{equation}{section}
\newcommand{\Om}{\Omega}
\newcommand{\E}{\mathbb{E}}
\newcommand{\R}{\mathbb{R}}
\newcommand{\Ff}{\mathbb{F}}
\newcommand{\U}{\mathcal{U}}
\newcommand{\V}{\mathcal{V}}
\newcommand{\C}{\mathcal{C}}
\newcommand{\F}{\mathcal{F}}
\newcommand{\cs}{\mathcal{S}}
\newcommand{\ch}{\mathcal{H}}
\newcommand{\Prob}{\mathbb{\Prob}}
\newcommand{\mytilde}{\raise.17ex\hbox{$\scriptstyle\mathtt{\sim}$}}
\def \Om {\Omega}
\def \rw {\rightarrow}
\newcommand{\e}{\varepsilon}
\begin{document}
\title[Optimal control and zero-sum games of mean-field type]{Optimal control and zero-sum stochastic differential game problems of mean-field type}

\author{Boualem Djehiche and Said Hamad\`ene}

\address{Department of Mathematics \\ KTH Royal Institute of Technology \\ 100 44, Stockholm \\ Sweden}
\email{boualem@math.kth.se}
\address{Universit\'e du Maine, LMM \\ Avenue Olivier Messiaen \\ 72085 Le Mans, Cedex 9, France}
\email{ hamadene@univ-lemans.fr}


\date{First version November 17, 2016. This version \today}

\subjclass[2010]{60H10, 60H07, 49N90}

\keywords{mean-field, nonlinear diffusion process, backward SDEs,
optimal control, zero-sum game,  saddle-point}

\begin{abstract}
We establish existence of nearly-optimal controls,   conditions for
existence of an optimal control and a saddle-point  for respectively
a control problem and zero-sum differential game associated with
payoff functionals of mean-field type, under  dynamics driven by
weak solutions of stochastic differential equations of mean-field
type.
\end{abstract}

\maketitle

\tableofcontents

\section{Introduction}
In this work we investigate existence of an optimal control and a
saddle-point for a zero-sum game associated with a payoff functional
of mean-field type, under a dynamics driven by the weak solution of
a  stochastic differential equation (SDE) also of mean-field type.
The obtained results extend in a natural way those obtained in
\cite{Ham-Lepl95} for standard  payoffs associated with standard
diffusion processes.

Given a control process $u:=(u_t)_{t\leq T}$ with values in some
compact metric space $U$, the controlled SDE of mean-field type we
consider in this paper is of the following functional form:
\begin{equation}\label{solfaible}
  dx_t=f(t,x_.,P^u\circ x_t^{-1},u_t)dt+\sigma(t,x_.)dW^{P^u}_t,\quad x_0=\xi\in \R^d,
\end{equation}
i.e. the $f$ and $\sigma$ depend on the whole path $x_.$ and
$P^u\circ x_t^{-1}$ (this feature can be improved substantially, see
Remark \ref{remimprove}), the marginal probability distribution of
$x_t$ under the probability measure $P^u$, and where $W^{P^u}$ is a
standard Brownian motion under $P^u$.  The payoff functional
$J(u),\,\, u\in\U,$ associated with the controlled SDE is of the
form
\begin{equation*}
J(u):=E^u\left[\int_0^T h(t,x_.,P^u\circ x_t^{-1},u_t)dt+
g(x_T,P^u\circ x_T^{-1})\right],
\end{equation*}
where $E^u$ denotes the expectation w.r.t. $P^u$. \\
As an example, the functions $f$, $g$ and $h$ can have the following
forms
$$f(t,x_.,E^u[\varphi_1(x_t)],u), g(x,E^u[\varphi_2(x_T)]) \mbox{ and }
h(t,x_.,E^u[\varphi_3(x_t)],u)$$ where $\varphi_i$, $i=1,2,3$, are
bounded Borel-measurable functions.\\Taking $h=0$  and
$g(x,y)=\varphi_2(x)^2-y^2$,  the cost functional reduces to the
variance, $J(u)=E^u[\varphi_2(x_T)^2])-\left(E^u[\varphi_2
(x_T)]\right)^2=Var_{P^u}[\varphi_2(x_T)]$.\\

While controlling a strong solution of an SDE means controlling the
process $x^u$ defined on a given probability space $(\Omega, \F,\Ff,
P)$ on which a Brownian motion $W$ is defined exists and $\Ff$ is
its natural filtration, controlling a weak solution of an SDE boils
down to controlling the Girsanov density process $L^u:=dP^u/dP$ of
$P^u$ w.r.t. a reference probability measure $P$  on $\Omega$ such
that $(\Omega,P)$ carries a Brownian motion $W$ and such that the
coordinates process $x_t$ is the unique solution of the following
stochastic differential equation:
\begin{equation*}\label{free-SDE}
dx_t=\sigma(t,x_.)dW_t,\quad x_0=\xi.
\end{equation*}
Integrating by parts,  the payoff functional can be expressed in
terms of $L^u$ as follows
$$
J(u)=E\left[\int_0^T L^u_th(t,x_.,P^u\circ x_t^{-1},u_t)dt+
L^u_Tg(x_T,P^u\circ x_T^{-1})\right],
$$
where $E$ denotes the expectation w.r.t. $P$. For this reason, we do
not include a control parameter in the diffusion term $\sigma$.

 In the first part of this paper we establish conditions for existence of an optimal control associated with $J(u)$:  Find  a stochastic process $u^*$ with values in $U$ such that
\begin{equation*}\label{opt-J}
J(u^*)=\min_{u\in \U}J(u).
\end{equation*}

The recent paper by Carmona and Lacker \cite{Car-Lacker} discusses a
similar problem but in the so-called mean-field game setting (where
they further consider the marginal laws of the control process,
i.e., $P^u\circ u_t^{-1}$) which has the following structure (cf.
\cite{Car-Lacker}):
\begin{enumerate}
\item  Fix a probability measure $\mu$ on the path space and a flow $\nu:   t \mapsto \nu_t$ of measures
on the control space;
\item  Standard optimization: With $\mu$ and $\nu$ frozen, solve the standard optimal control problem:
\begin{equation}\label{mf-game}
\left\{\begin{array}{lll}
\inf_u E^u\left[\int_0^T h(t,x_.,\mu,\nu, u_t)dt+ g(x_T,\mu)\right], \\
dx_t=f(t,x_.,\mu, u_t)dt+\sigma(t, x_.)dW^{P^u}_t,\quad x_0=\xi\in
\R^d,
\end{array}
\right.
\end{equation}
i.e. find an optimal control $u$, inject it into the dynamics of
(\ref{mf-game}), and find the law $\Phi_x(\mu,\nu)$ of the optimally
controlled state process and the flow $\Phi_u(\mu,\nu)$ of marginal
laws of the optimal control process;
\item Matching: Find a fixed point $\mu=\Phi_x(\mu,\nu),\,\, \nu=\Phi_u(\mu,\nu)$.
\end{enumerate}
To perform the matching step (3), the authors of \cite{Car-Lacker}
are led to impose more or less stringent assumptions which in turn
narrow the scope of the applicability of their framework. This is
mainly due to the fact that the functional which is supposed to
provide the optimal control is rather irregular. Overall, to show
existence of a fixed point is not an easy task and cannot work in
broader frameworks. For further details about the mean-field games
approach see also \cite{Bensoussan-MF} and the references therein.

In this paper we use another approach which in a way  addresses the
full control problem where the marginal law changes with the control
process and is {\it not frozen} as in the mean-field game approach.
Our strategy goes as follows: By a fixed point argument we first
show that for any admissible control $u$ there exists a unique
probability $P^u$ under which the SDE
$$
  dx_t=f(t,x_.,P^u\circ x_t^{-1},u_t)dt+\sigma(t,x_.)dW^{P^u}_t,\quad x_0=\xi\in \R^d,
$$
has a weak solution, where $W^{P^u}$ is a Brownian motion under
${P^u}$. Moreover, the mapping which to $u$ associates $P^u$ is
continuous.  Therefore, the mean-field terms which appear in the
drift of the above equation and in the payoff functional $J(u)$ are
treated as continuous functions of $u$. Using this point of view,
which avoids the irregularity issues encountered in
\cite{Car-Lacker}, we suggest conditions for  existence of an
optimal control using backward stochastic differential equations
(BSDEs)  in a similar fashion the standard control problems, i.e.\@
without mean-field terms. Indeed, if $(Y^u,Z^u)$ is the solution of
the BSDEs associated with the driver (Hamiltonian)
$H(t,x_.,z,u):=h(t,x_.,P^{u}\circ x_t^{-1},u_t)+z\cdot
\sigma^{-1}(t,x_.)f(t,x_.,P^u\circ x_t^{-1},u_t)$ and the terminal
value $g(x_T,P^{u}\circ x^{-1}_T)$, we have $Y^u_0=J(u)$. Moreover,
the unique solution $(Y^*,Z^*)$ of the BSDE associated with
$$
H^*(t,x_.,z):=\underset{u\in \U}{\mathrm{ess}\inf\,}
H(t,x_.,z,u),\,\, g^*(x_.):=\underset{u\in
\U}{\mathrm{ess}\inf\,}g(x_T,P^{u}\circ x^{-1}_T)
$$
satisfies, under appropriate assumptions, $Y^*(t)=\underset{u\in
\U}{\mathrm{ess}\inf\,}Y^u(t)$. The use of the essential infimum
over the whole set of admissible controls $\U$ instead of the
infimum of the Hamiltonian $H$ over the set $U$ of actions (as is
the case for the standard control problem, as discussed  e.g. in
\cite{Ham-Lepl95})  is simply due to the fact that the mean-field
coupling $P^{u}\circ x_t^{-1}$ involves the whole path of the
control $u$ over $[0,t]$ and not only on $u_t$. This nonlocal
feature of the dependence of $H$ on the control does not  seem
covered by the powerful Benes' type 'progressively' measurable
selection, frequently used in standard control problems. Thus, if
there exists $u^*\in\U$ such that $H^*(t,x_.,z)=H(t,x_.,z,u^*)$ and
$g^*(x_.)=g(x_T,P^{u^*}\circ x^{-1}_T)$, then $u^*$ is an optimal
control for $J(u)$. We don't know of any suitable measurable
selection theorem that would guarantee existence of $u^*$.

The zero-sum game we consider is between two players with controls
$u$ and $v$ valued in some compact metric spaces $U$ and $V$,
respectively. The dynamics and the payoff function associated with
the game are both of mean-field type and are given by
\begin{equation}\label{sde-game}
  dx_t=f(t,x_.,P^{u,v}\circ x_t^{-1},u_t,v_t)dt+\sigma(t,x_.)dW^{P^{u,v}}_t,\quad x_0=\xi\in \R^d,
\end{equation}
and
\begin{equation*}
J(u,v):=E^{u,v}\left[\int_0^T h(t,x_.,P^{u,v}\circ
x_t^{-1},u_t,v_t)dt+ g(x_T,P^{u,v}\circ x^{-1}_T)\right],
\end{equation*}
where $P^{u,v}\circ x_t^{-1}$ is the marginal probability
distribution of $x_t$ under the probability measure $P^{u,v}$,
$W^{P^{u,v}}$ is a standard Brownian motion under $P^{u,v}$ and
$E^{u,v}$ denotes the expectation w.r.t. $P^{u,v}$.

In the zero-sum game, the first player (with control $u$) wants to
minimize the payoff $J(u,v)$ while  the second player (with control
$v$) wants to maximize it.  The zero-sum game boils down to
investigating the existence of a saddle point for the game i.e.  to
show existence of a pair $(u^*, v^*)$ of strategies such that
\begin{equation*}
J(u^*, v) \le J(u^*, v^*)\le J(u,v^*),
\end{equation*}
for each $(u, v)$ with values in $U\times V$. By using the same
approach as in the control framework, we show that the game has a
saddle-point. The recent paper by Li and Min \cite{limin} deals with
the same zero-sum game for weak solutions of SDEs of the form
(\ref{solfaible}), where they apply a similar 'matching argument'
approach as \cite{Car-Lacker}. However, due to the irregularity of
the functional which provides the fixed point, they could only show
existence of a so-called generalized saddle-point i.e.  of a pair of
strategies $(u^*, v^*)$ which satisfies (see, for instance, Theorem
5.6 in \cite{limin})
\begin{equation*}
J(u^*, v)-C\psi(v,v^*) \le J(u^*, v^*)\le J(u,v^*)+C\psi(u,u^*),
\end{equation*}
where $\psi(u,\bar u):=(E[\int_0^T d^2(u_s.\bar u_s)ds])^{1/4}$ and
$C$ is a positive constant depending only on $f$ and $h$.

Instead of the Wasserstein metric which is by now standard in the
literature dealing with mean-field models, because it is designed to
guarantee weak convergence of probability measures and convergence
of finite moments, in this paper we have chosen to use the total
variation as a metric between two probability measures, although it
does not guarantee existence of finite moments, simply due to its
relationship to the Hellinger distance thanks to the celebrated
Csisz{\'a}r-Kullback-Pinsker inequality (see the bound (4.22),
Theorem V.4.21 in \cite{Jacod-shir}) which gives a simple and direct
proof of existence of a unique probability $P^u$  (resp. $P^{u,v}$)
under which the SDE \eqref{solfaible} (resp. \eqref{sde-game}) has a
weak solution.

The paper is organized as follows. In Section \ref{sde-mf}, we
account for existence and uniqueness of the weak solution of the SDE
of mean-field type. In Section \ref{control}, we  provide conditions
for existence of an optimal control and prove existence of
nearly-optimal controls. Finally, in Section \ref{zero-sum}, we
investigate existence of a saddle point for a two-persons zero-sum
game.


\section{Preliminaries}\label{Preliminaries}


Let $\Om:=\mathcal{C}([0,T]; \R^d)$ be the space of $\R^d$-valued continuous functions on $[0,T]$ endowed with the metric of uniform convergence on $[0,T]$; $|w|_t:=\sup_{0\le s\le t}|w_s|$, for $0\le t\le T$. Denote by $\F$ the Borel $\sigma$-field over $\Om$. Given $t\in [0,T]$ and $\omega\in\Om$, let $x(t,\omega)$ be the position in $\R^d$ of $\omega$ at time $t$. Denote by $\F^0_t:=\sigma(x_s,\,\, s\le t),\, 0\le t\le T,$ the filtration generated by $x$. Below,  $C$ denotes a generic positive constant which may change from line to line.\\

Let $\sigma$ be a function from $[0,T]\times\Om$ into $\R^{d\times
d}$ such that
\begin{itemize}
\item[(A1)]  $\sigma$ is $\F^0_t$-progressively measurable ;
\item[(A2)] There exists a constant $C>0$ such that
\begin{itemize}
\item[(a)] For every $t\in[0,T]$ and  $w, \bar w \in \Om$, $
|\sigma(t,w)-\sigma(t,\bar w)|\le C|w-\bar w|_t.$

\item[(b)] $\sigma$ is invertible and its inverse $\sigma^{-1}$  satisfies $|\sigma^{-1}(t,w)|\le C(1+|w|_t^{\alpha}),$ for some constant $\alpha \ge 0$.

\item[(c)] For every $t\in[0,T]$ and  $w\in \Om$, $|\sigma(t,w)|\le C(1+|w|_t).$
\end{itemize}
\end{itemize}

Let $P$ be a probability measure on $\Omega$ such that $(\Omega,P)$
carries a Brownian motion $(W_t)_{0\le t\leq T}$ and such that the
coordinates process $(x_t)_{0\le t\leq T}$ is the unique solution of
the following stochastic differential equation:
\begin{equation}\label{free-SDE}
dx_t=\sigma(t,x_.)dW_t,\quad x_0=\xi \in \R^d.
\end{equation}
Such a triplet $(P,W,x)$ exists due to Proposition 4.6 in
(\cite{Karatzas-Shreve}, p.315) since $\sigma$ satisfies (A2).
Moreover, for every $p\ge 1$,
 \begin{equation}\label{x-estim}
 E[|x|_T^{p}]\le C_p,
 \end{equation}
where $C_p$ depends only on $p, T$, the initial value $\xi$ and the linear growth constant of $\sigma$ (see \cite{Karatzas-Shreve}, p. 306). Again, since $\sigma$ satisfies (A2), $\F^0_t$ is the same as $\sigma\{W_s, s\leq t\}$ for any $t\leq T$. \\
 We denote by $\Ff:=(\F_t)_{0\le t\le T}$ the completion of $(\F^0_t)_{t\le T}$ with the $P$-null sets of $\Omega$.

\medskip
Let $\mathcal{P}(\R^d)$ denote the set of probability measures on
$\R^d$ and $\mathcal{P}_2(\R^d)$ the subset of measures with finite
second moment. For $\mu,\nu\in\mathcal{P}(\R^d)$, the total
variation distance is defined by the formula
\begin{equation}\label{TV-B}
d(\mu,\nu)=2\sup_{B\in\mathcal{B}(\R^d)}|\mu(B)-\nu(B)|.
\end{equation}
Furthermore, let $\mathcal{P}(\Omega)$ be the space of probability
measures $P$ on $\Om$ and  $\mathcal{P}_p(\Omega), \,p\ge 1,$ be the
subspace of probability measures such that $$
\|P\|_p^p:=\int_{\Om}|w|^p_TP(dw)=E[|x|_T^p]<+ \infty,
$$
where $|x|_t:=\sup_{0\le s\le t}|x_s|$, $0\le t\le T$. \\
Define on $\F$ the total variation metric
\begin{equation}\label{TV-F}
d(P,Q):=2\sup_{A\in\F}|P(A)-Q(A)|.
\end{equation}
Similarly, on the filtration $\Ff$, we define the total variation
metric between two probability measures $P$ and $Q$ as
\begin{equation}\label{TV-filt}
D_t(P,Q):=2\sup_{A\in\F_t}|P(A)-Q(A)|,\quad 0\le t\le T.
\end{equation}
It satisfies
\begin{equation}
D_s(P,Q)\le D_t(P,Q),\quad 0\le s\le t.
\end{equation}
For $P, Q\in \mathcal{P}(\Om)$ with time marginals $P_t:=P\circ
x_t^{-1}$ and $Q_t:=Q\circ x_t^{-1}$, the total variation distance
between $P_t$ and $Q_t$ satisfies
\begin{equation}\label{margine}
d(P_t,Q_t)\le D_t(P,Q),\quad 0\le t\le T.
\end{equation}
Indeed, we have
\begin{equation*}\begin{array}{lll}
d(P_t,Q_t):=2\sup_{B\in\mathcal{B}(\R^d)}|P_t(B)-Q_t(B)|\\\qquad\quad\quad\;\;
=2\sup_{B\in\mathcal{B}(\R^d)}|P(x_t^{-1}(B))-Q(x_t^{-1}(B))|\\\qquad\quad\quad\;\;
\le 2\sup_{A\in\mathcal{F}_t}|P(A)-Q(A)|=D_t(P,Q).
\end{array}
\end{equation*}

\medskip
Endowed with the total variation metric $D_T$, $\mathcal{P}(\Om)$ is
a complete metric space. Moreover, $D_T$ carries out the usual
topology of weak convergence.

\section{Diffusion process of mean-field type}\label{sde-mf}

\noindent Hereafter, a process $\theta$ from $[0,T]\times \Om$ into
a measurable space is said to be progressively measurable if it is
progressively measurable w.r.t. $\Ff$. Let ${\cs}^2_T$  be the set
of $\Ff$-progressively measurable continuous processes
$(\zeta_t)_{t\leq T}$ such that $E[\sup_{t\leq
T}|\zeta_t|^2]<\infty$ and finally let ${\ch}^2_T$  be the set of
$\Ff$-progressively measurable processes $(\theta_t)_{t\leq T}$ such
that $E[\int_0^T|\theta_s|^2ds]<\infty$.
\bigskip

\noindent Let $b$ be a measurable function from
$[0,T]\times\Om\times \mathcal{P}(\R^d)$ into $\R^d$ such that
 \begin{itemize}
 \item[(A3)] For every $Q\in \mathcal{P}(\Om)$, the process $((b(t, x_.,Q\circ x_t^{-1}))_{t\leq T}$ is progressively measurable.

 \item[(A4)] For every $t\in[0,T]$, $w\in \Om$ and  $\mu, \nu \in\mathcal{P}(\R^d)$,
  $$
  |b(t,w,\mu)-b(t,w,\nu)|\le Cd(\mu,\nu).
  $$
 \item[(A5)] For every $t\in[0,T]$, $w\in \Om$ and $\mu\in\mathcal{P}(\R^d)$,
 $$
 |b(t,w,\mu)|\le C(1+|w|_t).
 $$
 \end{itemize}

\noindent  Next, for $Q\in \mathcal{P}(\Omega)$, let $P^Q$ be the
measure on $(\Om,\F)$ defined by
 \begin{equation}\label{PQ}
 dP^Q:=L_T^Q dP
 \end{equation}
with
  \begin{equation}\label{PQ-density}
 L_t^Q:=\mathcal{E}_t\left(\int_0^{\cdot} \sigma^{-1}(s,x_{\cdot})b(s,x_{\cdot},Q\circ x_s^{-1})dW_s\right),\quad 0\le t\le T,
 \end{equation}
 where, for any $(\Ff,P)$-continuous local martingale $M=(M_t)_{0\le t\le T}$, $\mathcal{E}(M)$ denotes the Doleans exponential $\mathcal{E}(M):=(\exp{M_t-\frac{1}{2}\langle M\rangle_t})_{{0\le t\le T}}$. Thanks to assumptions (A2) and (A5), $P^Q$ is a probability measure on  $(\Omega,\F)$. A proof of this fact follows the same lines of the proof of Proposition A.1 in \cite{Elkar-Ham}. Hence, in view of Girsanov's theorem, the process $(W^Q_t,\,\, 0\le t\le T)$ defined by
 $$
 W_t^Q:=W_t-\int_0^t \sigma^{-1}(s,x_.)b(s,x_.,Q\circ x_s^{-1})ds, \quad 0\le t\le T,
 $$
 is an $(\Ff, P^Q)$-Brownian motion. Furthermore, under $P^{Q}$,
 \begin{equation}\label{SDE-2}
  dx_t=b(t,x_.,Q\circ x_t^{-1})dt+\sigma(t,x_.)dW^Q_t,\quad x_0=\xi\in \R^d.
 \end{equation}
 Furthermore, in view of (A2) and (A5), the H\"older and Burkholder-Davis-Gundy inequalities yield, for every $p\ge 1$,
$$
\|P^Q\|_p^p=E_{P^Q}\left[|x|_T^p\right]\le
C_p\left(1+E_{P^Q}\left[\int_0^T |x|_t^p dt\right]\right).
$$
where the constant $C_p$ depends only on $p, T,\xi$ and the linear
growth constants of $b$ and $\sigma$. By Gronwall's inequality, we
obtain
\begin{equation}\label{P(Q)-estim}
E_{P^Q}[|x|^p_T]\le C_p<+\infty.
\end{equation}

Next, we will show that there is $\bar Q$ such that $P^{\bar
Q}={\bar Q}$, i.e., $\bar Q$ is a fixed point. Moreover, $\bar Q$
has a finite  moment of any order $p\ge 1$.

\begin{theorem}\label{FP} The map
\begin{equation*}\begin{array}{lll}
\Phi: \mathcal{P}(\Omega)\longrightarrow \mathcal{P}(\Omega) \\
\qquad\quad Q  \mapsto \Phi(Q):=P^Q;\quad   dP^Q:=L_T^Q dP
\end{array}
\end{equation*}
admits a unique fixed point.

Moreover, for every $p\ge 1$, the fixed point, denoted $\bar Q$,
belongs to $\mathcal{P}_p(\Omega)$, i.e.
\begin{equation}\label{Q-bar-estim}
E_{\bar Q}[|x|^p_T]\le C_p<+\infty,
\end{equation}
where the constant $C_p$ depends only on $p, T,\xi$ and the linear
growth constants of $b$ and $\sigma$.
\end{theorem}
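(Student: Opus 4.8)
The plan is to realize $\bar Q$ as the unique fixed point of $\Phi$ by a Picard-type argument on the complete metric space $(\mathcal{P}(\Omega), D_T)$, whose completeness was recorded just before the statement. The map $\Phi$ is unlikely to be a contraction for $D_T$ itself, so I would instead establish a sharper estimate for the filtration-indexed metric $D_t$ and then show that a sufficiently high iterate $\Phi^n$ is a strict contraction; the standard corollary of the Banach fixed point theorem (a map one of whose iterates is a contraction has a unique fixed point) then yields uniqueness and existence of a fixed point for $\Phi$.

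The heart of the matter is a one-step estimate, which I will call $(\star)$:
\begin{equation*}
D_t(P^Q, P^{Q'})^2 \le C\int_0^t D_s(Q,Q')^2\, ds, \qquad 0 \le t \le T.
\end{equation*}
To prove $(\star)$ I would invoke the Csisz\'ar--Kullback--Pinsker inequality to pass from total variation to relative entropy. Writing $\beta_s^Q := \sigma^{-1}(s,x_.)\, b(s,x_.,Q\circ x_s^{-1})$, so that $L^Q = \mathcal{E}(\int_0^{\cdot} \beta_s^Q\, dW_s)$, the restrictions of $P^Q$ and $P^{Q'}$ to $\mathcal{F}_t$ are equivalent (the Dol\'eans exponentials are strictly positive), and a Girsanov computation gives the clean identity
\begin{equation*}
\mathrm{KL}\big(P^{Q'}|_{\mathcal{F}_t}\,\|\,P^{Q}|_{\mathcal{F}_t}\big) = \tfrac12\, E_{P^{Q'}}\Big[\int_0^t |\beta_s^{Q'} - \beta_s^Q|^2\, ds\Big].
\end{equation*}
Since CKP bounds $D_t^2$ by twice the relative entropy of the $\mathcal{F}_t$-restrictions, combining this with the polynomial growth bound (A2)(b) on $\sigma^{-1}$, the Lipschitz estimate (A4), and the marginal comparison \eqref{margine}, I get
\begin{equation*}
D_t(P^Q,P^{Q'})^2 \le E_{P^{Q'}}\Big[\int_0^t |\beta_s^{Q'}-\beta_s^Q|^2\, ds\Big] \le C\int_0^t E_{P^{Q'}}\big[(1+|x|_s^{\alpha})^2\big]\, D_s(Q,Q')^2\, ds.
\end{equation*}
The crucial point is that the uniform moment bound \eqref{P(Q)-estim} controls $E_{P^{Q'}}[(1+|x|_s^{\alpha})^2]$ by a constant independent of $Q'$, which delivers $(\star)$.

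With $(\star)$ in hand, iterating along $\Phi^n$ and using $D_s \le D_T$ gives, by induction, $D_T(\Phi^n Q, \Phi^n Q')^2 \le \frac{(CT)^n}{n!}\, D_T(Q,Q')^2$; here it is essential that \eqref{P(Q)-estim} applies to \emph{every} intermediate measure $\Phi^k(Q')=P^{\Phi^{k-1}(Q')}$, so the constant $C$ stays fixed along the iteration. Since $(CT)^n/n! \to 0$, some iterate $\Phi^n$ is a strict contraction, whence $\Phi$ has a unique fixed point $\bar Q$. Finally, because $\bar Q = P^{\bar Q}$, applying \eqref{P(Q)-estim} with $Q=\bar Q$ immediately yields $E_{\bar Q}[|x|_T^p] = E_{P^{\bar Q}}[|x|_T^p] \le C_p$ for every $p\ge 1$, which is exactly \eqref{Q-bar-estim}.

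The main obstacle I anticipate is the unboundedness of $\sigma^{-1}$: because its growth exponent $\alpha$ may be strictly positive, the Girsanov drift $\beta^Q$ is not bounded, and a naive direct estimate of $E[\,|L_t^Q - L_t^{Q'}|\,]$ would entangle the density difference with the drift difference and require $L^p$-control of the exponential densities. The CKP route sidesteps this entirely by reducing matters to the quadratic functional $E_{P^{Q'}}[\int_0^t |\beta_s^{Q'}-\beta_s^Q|^2\, ds]$, after which the uniform moment estimate \eqref{P(Q)-estim} --- itself obtained via Burkholder--Davis--Gundy and Gronwall --- finishes the job. The only genuinely technical point to verify is the legitimacy of the relative-entropy identity and of the interchange of expectations, which rests on the finiteness of all moments under each $P^Q$ guaranteed by \eqref{P(Q)-estim}.
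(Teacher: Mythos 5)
Your proposal is correct, and its skeleton coincides with the paper's own proof: both establish the one-step estimate $D_t^2(\Phi(Q),\Phi(\widehat Q))\le C\int_0^t D_s^2(Q,\widehat Q)\,ds$ with a constant uniform in $Q,\widehat Q$ thanks to the moment bound \eqref{P(Q)-estim}, iterate to get the factorial decay $C^NT^N/N!$, conclude via the fixed-point theorem for a map with a contracting iterate, and obtain \eqref{Q-bar-estim} from the moment estimate (the paper re-runs the H\"older/BDG/Gronwall computation under $\bar Q$, which is exactly your application of \eqref{P(Q)-estim} to $Q=\bar Q$). The only divergence is how the one-step estimate is derived: the paper identifies the Hellinger process \eqref{hellinger} of the pair $(\Phi(Q),\Phi(\widehat Q))$ via Theorem IV.1.33 of Jacod--Shiryaev and invokes the total-variation bound (4.22) of Theorem V.4.21 there (inequality \eqref{estimate}), whereas you bound $D_t^2$ by twice the relative entropy of the $\F_t$-restrictions (Pinsker) and compute that entropy explicitly by a Girsanov calculation. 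These routes are essentially equivalent --- both reduce to the same quadratic functional $E\bigl[\int_0^t \Delta b_s^{\dagger}a_s^{-1}\Delta b_s\,ds\bigr]$, and the paper itself advertises its Hellinger bound as a Csisz\'ar--Kullback--Pinsker-type inequality --- but yours is more self-contained, using only Pinsker plus an It\^o/Girsanov computation instead of the semimartingale Hellinger machinery, at the price of having to justify that the stochastic integral in the entropy identity has zero mean; as you note, this follows from \eqref{P(Q)-estim}, since by (A2)(b), (A4) and the fact that total variation is bounded by $2$, the drift difference is dominated by $C(1+|x|_s^{\alpha})$, which is square-integrable under every $P^{Q}$. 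One small point in your favour: you state the estimate for all $t\le T$, which is what the iteration actually requires (the paper displays it only at $t=T$, but the kernel $(T-t)^{N-1}/(N-1)!$ in its iterated inequality implicitly uses the intermediate-time version, exactly as in your induction).
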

\begin{proof} We show the contraction property of the map $ \Phi$ in the complete metric space $\mathcal{P}(\Omega)$, endowed with the  total variation distance $D_T$. To this end, given $Q,\widehat{Q}\in\mathcal{P}(\Om)$, we use an estimate of the total variation distance $D_T(\Phi(Q),\Phi(\widehat{Q}))$ in terms of a version of the Hellinger process associated with the coordinate process $x$ under the probability measures $\Phi(Q)$ and $\Phi(\widehat{Q})$, respectively. Indeed, since by (\ref{SDE-2}),
\begin{equation*}\left\{\begin{array}{lll}
 \text{under}\,\, \Phi(Q),\;\; dx_t=b(t,x_.,Q_t)dt+\sigma(t,x_.)dW^Q_t,\quad x_0=\xi\in \R^d,\\ \\
 \text{under}\,\, \Phi(\widehat{Q}),\;\; dx_t=b(t,x_.,\widehat{Q}_t)dt+\sigma(t,x_.)dW^{\widehat{Q}}_t,\quad x_0=\xi\in \R^d,\\
 \end{array}
 \right.
\end{equation*}
in view of Theorem IV.1.33 in \cite{Jacod-shir}, a version of the
associated Hellinger process is
\begin{equation}\label{hellinger}
\Gamma_T:=\frac{1}{8}\int_0^T \Delta
b_t(Q,\widehat{Q})^{\dagger}a^{-1}_t\Delta b_t(Q,\widehat{Q})dt,
\end{equation}
where
\begin{equation*}\label{b-notation}
\Delta b_t(Q,\widehat{Q}):=b(t,x_.,Q_t)-b(t,x_.,\widehat{Q}_t)
\end{equation*}
and $a_t:=(\sigma\sigma^{\dagger})(t,x_.)$ and $M^{\dagger}$ denotes
the transpose of the matrix $M$. We may use the estimate (4.22) of
Theorem V.4.21 in \cite{Jacod-shir}, to obtain
\begin{equation}\label{estimate}
D_T(\Phi(Q),\Phi(\widehat{Q}))\le
8\sqrt{E_{\Phi(Q)}\left[\Gamma_T\right]}.
\end{equation}
By (A2), (A4) and \eqref{P(Q)-estim}, we have
\begin{equation*}
E_{\Phi(Q)}\left[\Delta b_t(Q,\widehat{Q})^{\dagger}a^{-1}_t\Delta
b_t(Q,\widehat{Q})\right]\le C d^2(Q_t,\widehat{Q}_t)\le
CD^2_t(Q,\widehat{Q}),
\end{equation*}
which together with (\ref{estimate}) yield
\begin{equation}
D^2_T(\Phi(Q),\Phi(\widehat{Q}))\le C\int_0^TD^2_t(Q,\widehat{Q})dt.
\end{equation}
Iterating this inequality, we obtain, for every $N>0$,
$$
D^2_T(\Phi^N(Q),\Phi^N(\widehat{Q}))\le
C^N\int_0^T\frac{(T-t)^{N-1}}{(N-1)!}D^2_t(Q,\widehat{Q})dt\le
\frac{C^NT^N}{N!}D^2_T(Q,\widehat{Q}),
$$
where $\Phi^N$ denotes the $N$-fold composition of the map $\Phi$. Hence, for $N$ large enough, $\Phi^N$ is a contraction which entails that $\Phi$ admits a unique fixed point.  \\

Let $\bar Q$ be such a fixed point for the map $\Phi$. Thus, under
$\bar Q$,
\begin{equation*}
 dx_t=b(t,x_.,\bar Q_t)dt+\sigma(t,x_.)dW^{\bar Q},\quad x_0=\xi\in \R^d,
\end{equation*}
where $\bar Q_t:=\bar Q\circ x_t^{-1}$. In view of assumptions (A2)
and (A5), the H\"older and Burkholder-Davis-Gundy inequalities yield
$$
\|\bar Q\|_p^p=E_{\bar Q}\left[|x|_T^p\right]\le C_p\left(1+E_{\bar
Q}\left[\int_0^T |x|_t^p dt\right]\right).
$$
By Gronwall's inequality, we obtain \eqref{Q-bar-estim} i.e.
\begin{equation*}
E_{\bar Q}[|x|^p_T]\le C_p<+\infty.
\end{equation*}
\end{proof}
\begin{remark} \label{remimprove} The dependence of the drift $b$ with respect to the law of $x_t$ under $Q$, i.e.,  $Q \circ x_t^{-1}$ can be relaxed substantially since we can replace this latter by $Q\circ \phi(t,x)^{-1}$ where
$\phi(t,x)$ is an adapted process. For example one can choose $\phi(t,x)=\sup_{0\le s\leq t}x_s$. The main point is the inequality (\ref{margine}) which still hold with a general adapted process $\phi(t,x)$. \\
\qed
 \end{remark}

\section{Optimal control of the diffusion process of mean-field type}\label{control}

Let $(U, \delta)$ be a compact metric space with its Borel field
$\mathcal{B}(U)$ and $\U$
 the set of $\Ff$-progressively measurable processes $u=(u_t)_{t\leq T}$ with values in $U$.
 We call $\U$ the set of admissible controls.\\

Next let $f$ and $h$ be two measurable functions from $[0,T]\times\Om\times\mathcal{P}(\R^d)\times U$ into $\R^d$ and $\R$, respectively, and $g$ be a measurable functions from $\R^d\times\mathcal{P}(\R^d)$ into $\R$ such that \\

\begin{itemize}
 \item[(B1)] For any $u\in\U$ and $Q\in \mathcal{P}(\Om)$, the processes $(f(t, x_.,Q\circ x_t^{-1},u_t))_t$ and $(h(t, x_.,Q\circ x_t^{-1},u_t))_t$ are progressively measurable. Moreover, $g(x_T,Q\circ x_T^{-1})$ is $\mathcal{F}_T$-measurable.
 \item[(B2)] For every $t\in[0,T]$, $w\in\Om$, $u,v\in U$ and $\mu, \nu \in\mathcal{P}(\R^d)$,
  $$
  |\phi(t,w,\mu, u)-\phi(t,w,\nu,v)|\le C(d(\mu,\nu)+\delta(u,v)).
  $$
  for $\phi\in\{f,h,g\}$.
   \item[(B3)] For every $t\in[0,T]$, $w\in\Om$, $\mu\in\mathcal{P}(\R^d)$ and  $u\in U$,
 $$
 |f(t,w,\mu,u)|\le C(1+|w|_t).
 $$
 \item[(B4)] $h$ and $g$ are uniformly bounded
  \end{itemize}

\medskip
For $u\in\U$, let $P^u$ be the probability measure on $(\Om,\F)$
which is a fixed point of $\Phi^u$ defined in the same was as in
Theorem \eqref{FP} except that the drift term $b(\cdot)$ depends
moreover on $u$ but this does not rise a major issue. Thus we have
 \begin{equation}\label{P-u}
 dP^u:=L_T^u dP,
 \end{equation}
where
\begin{equation}\label{P-u-density}
 L_t^u:=\mathcal{E}_t\left(\int_0^{\cdot} \sigma^{-1}(s,x_.)f(s,x_.,P^u\circ x_s^{-1},u_s)dW_s\right),\quad 0\le t\le T.
 \end{equation}
By Girsanov's theorem, the process $(W^u_t,\,\, 0\le t\le T)$
defined by
 $$
 W_t^u:=W_t-\int_0^t \sigma^{-1}(s,x_.)f(s,x_.,P^u\circ x_s^{-1},u_s)ds, \quad 0\le t\le T,
 $$
 is an $(\Ff, P^u)$-Brownian motion. Moreover, under $P^u$,
 \begin{equation}\label{SDE-u}
 dx_t=f(t,x,P^u\circ x_t^{-1},u_t)dt+\sigma(t,x)dW^u_t,\quad x_0=\xi\in \R^d.
 \end{equation}
Let $E^u$  denote the expectation w.r.t. $P^u$. In view of
\eqref{Q-bar-estim}, we have, for every $u\in\U$,
\begin{equation}\label{u-x-estim}
\|P^u\|_2^2=E^u[|x|^2_T]\le C<+\infty,
\end{equation}
where the constant $C$ depends only on $T,\xi$ and the linear growth
constants of $f$ and $\sigma$.

We also have the following estimate of the total variation between
$P^u$ and $P^v$.
\begin{lemma}\label{TV-u}
For every $u,v\in \U$, it holds that
\begin{equation}\label{TV-uv-1}
D_T^2(P^u,P^v)\le C E^u[\int_0^T\delta^2(u_t,v_t)dt].
\end{equation}
In particular, the function $u\mapsto P^u$ from $U$ into
$\mathcal{P}_2(\Om)$ is Lipschitz continuous: for every $u,v\in U$,
\begin{equation}\label{TV-uv-2}
D_T(P^u,P^v)\le C\delta(u,v).
\end{equation}
Moreover,
\begin{equation}\label{K}
K_T:=\sup_{u\in U}\|P^u\|_2\le C<\infty,
\end{equation}
for some constant $C>0$ that depends only on $T,\xi$ and the linear
growth constants of $f$ and $\sigma$.
\end{lemma}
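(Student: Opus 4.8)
The plan is to reproduce the contraction estimate of Theorem~\ref{FP}, but now comparing the two distinct fixed points $P^u$ and $P^v$ rather than two iterates of a single map. Under $P^u$ and $P^v$ the coordinate process solves \eqref{SDE-u} with drifts $f(t,x_.,P^u\circ x_t^{-1},u_t)$, $f(t,x_.,P^v\circ x_t^{-1},v_t)$ and the common diffusion $\sigma(t,x_.)$; so, exactly as in \eqref{hellinger}, a version of the Hellinger process of this pair is $\Gamma_t=\frac18\int_0^t\Delta f_s^{\dagger}a_s^{-1}\Delta f_s\,ds$, with $\Delta f_s:=f(s,x_.,P^u\circ x_s^{-1},u_s)-f(s,x_.,P^v\circ x_s^{-1},v_s)$ and $a_s=(\sigma\sigma^{\dagger})(s,x_.)$. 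Applying the Csisz\'ar--Kullback--Pinsker bound \eqref{estimate} on each $\F_t$ gives $D_t^2(P^u,P^v)\le C\,E^u[\Gamma_t]$.

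The heart of the matter is to bound $E^u[\Gamma_t]$. I would write $\Delta f_s^{\dagger}a_s^{-1}\Delta f_s=|\sigma^{-1}(s,x_.)\Delta f_s|^2$, use the Lipschitz assumption (B2) to get $|\Delta f_s|^2\le C\big(d^2(P^u\circ x_s^{-1},P^v\circ x_s^{-1})+\delta^2(u_s,v_s)\big)$, and use (A2)(b) for $|\sigma^{-1}(s,x_.)|^2\le C(1+|x|_s^{\alpha})^2$. Taking $E^u$ and splitting the two contributions: the law part is deterministic and, via \eqref{margine}, bounded by $d^2(P^u\circ x_s^{-1},P^v\circ x_s^{-1})\le D_s^2(P^u,P^v)$, the growth factor being absorbed by the uniform moment bound \eqref{u-x-estim}. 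This produces the Gronwall-type inequality
\[
D_t^2(P^u,P^v)\le C\int_0^t D_s^2(P^u,P^v)\,ds+C\,E^u\Big[\int_0^t\delta^2(u_s,v_s)\,ds\Big],
\]
and Gronwall's lemma (as used at the end of the proof of Theorem~\ref{FP}) removes the first integral and yields \eqref{TV-uv-1}.

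The remaining two assertions are then short. For \eqref{TV-uv-2} I would apply \eqref{TV-uv-1} to constant controls $u,v\in U$, where $\delta(u_s,v_s)=\delta(u,v)$ is deterministic, so that the right-hand side equals $CT\delta^2(u,v)$ and a square root gives the Lipschitz bound. For \eqref{K}, the estimate \eqref{u-x-estim} (a consequence of \eqref{Q-bar-estim}) already bounds $\|P^u\|_2^2=E^u[|x|_T^2]$ by a constant depending only on $T,\xi$ and the growth constants of $f,\sigma$, uniformly in $u$; hence $K_T$ inherits the same bound with no further work.

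The step I expect to be the genuine obstacle is the control contribution to $E^u[\Gamma_t]$, namely $E^u[(1+|x|_s^{\alpha})^2\,\delta^2(u_s,v_s)]$. Since both $|x|_s$ and $\delta(u_s,v_s)$ are random under $P^u$, the growth factor cannot simply be pulled out, and a Cauchy--Schwarz split exploiting only the boundedness of $\delta$ on the compact $U$ would produce $C(E^u[\int_0^T\delta^2])^{1/2}$, hence after Gronwall a fourth-root rather than the stated square-root control (tellingly, the same exponent $1/4$ appears in the generalized saddle-point of \cite{limin}). Obtaining \eqref{TV-uv-1} in the clean form above therefore requires coupling the growth exponent $\alpha$ with the random control increment carefully; it is immediate when $\sigma^{-1}$ is bounded ($\alpha=0$), and, importantly, the difficulty disappears entirely in the constant-control specialization used for \eqref{TV-uv-2}.
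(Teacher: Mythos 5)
Your proposal is correct and follows the paper's own proof essentially step for step: the paper compares $P^u$ and $P^v$ through the same Hellinger process, applies the analogue of \eqref{estimate}, uses (A2) and (B2) to bound $\Delta f_t^{\dagger}a_t^{-1}\Delta f_t$ by $C\big(D_t^2(P^u,P^v)+\delta^2(u_t,v_t)\big)$, and concludes with Gronwall; \eqref{TV-uv-2} is likewise obtained by specializing to constant controls. The only cosmetic difference is in \eqref{K}: the paper invokes compactness of $U$ together with the continuity of $u\mapsto P^u$ just proved, whereas you observe directly that the constant in \eqref{u-x-estim} is uniform in $u$; both are valid, and yours is the more direct route.

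Concerning the obstacle you flag at the end: you are right that it is a genuine issue, but it is an issue with the paper's own proof as well, not a defect of your reconstruction relative to it. The paper states the pointwise inequality $\Delta f_t^{\dagger}a_t^{-1}\Delta f_t\le C\big(d^2(P^u\circ x_t^{-1},P^v\circ x_t^{-1})+\delta^2(u_t,v_t)\big)$, which silently absorbs the factor $|\sigma^{-1}(t,x_.)|^2\le C(1+|x|_t^{\alpha})^2$ into the constant, i.e.\ it argues as if $\alpha=0$. As you note, this is harmless for the law term (being deterministic, it exits the expectation, and the moment bound \eqref{u-x-estim} controls $E^u[(1+|x|_t^{\alpha})^2]$) and for constant controls in \eqref{TV-uv-2}, but for general $u,v\in\U$ with $\alpha>0$ the term $E^u[(1+|x|_t^{\alpha})^2\,\delta^2(u_t,v_t)]$ does not collapse to $C\,E^u[\delta^2(u_t,v_t)]$ without an extra argument --- a Cauchy--Schwarz or H\"older split degrades the exponent exactly as you describe. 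So \eqref{TV-uv-1} is rigorous as written when $\sigma^{-1}$ is bounded (as under (A6), which the paper later assumes for the near-optimality results), and your diagnosis of where extra care is needed is accurate --- indeed more careful than the source.
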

\begin{proof} Using a similar estimate as (\ref{estimate}), we have
\begin{equation}\label{estimate-uv}
D_T(P^u,P^v)\le 8\sqrt{E^u\left[\tilde\Gamma^{u,v}_T\right]},
\end{equation}
where $\tilde\Gamma$ is the following version of the Hellinger
process associated with $P^u$ and $P^v$:
$$
\tilde\Gamma_T:=\frac{1}{8}\int_0^T \Delta
f_t(u,v)^{\dagger}a^{-1}_t\Delta f_t(u,v)dt,
$$
where
$$
\Delta f_t(u,v):=f(t,x_.,P^u\circ x_t^{-1},u_t)-f(t,x_.,P^v\circ
x_t^{-1},v_t).
$$
Using (A2) and (B2), we obtain
$$\begin{array}{lll}
\Delta f_t(u,v)^{\dagger}a^{-1}_t\Delta f_t(u,v)\le C(d^2(P^u\circ
x_t^{-1},P^v\circ x_t^{-1})+\delta^2(u_t,v_t))\\
\qquad\qquad\qquad\qquad\qquad \le
C(D^2_t(P^u,P^v)+\delta^2(u_t,v_t)).
\end{array}
$$
Hence, in view of (\ref{estimate-uv}), Gronwall's inequality yields
\begin{equation*}
D^2_T(P^u,P^v)\le CE^u\left[\int_0^T \delta^2(u_t,v_t)dt\right].
\end{equation*}
Inequality (\ref{TV-uv-2}) follows from (\ref{TV-uv-1}) by letting  $u_t\equiv u\in U$ and $v_t\equiv v\in U$. \\
It remains to show (\ref{K}). But, this follows from
(\ref{u-x-estim}) and the continuity of the function $u\mapsto P^u$
from the compact set $U$ into $\mathcal{P}_2(\Om)$.
\end{proof}

\medskip
 The cost functional $J(u),\,\, u\in\U$, associated with the controlled SDE (\ref{SDE-u}) is
\begin{equation}\label{J-u}
J(u):=E^u\left[\int_0^T h(t,x_.,P^u\circ x_t^{-1},u_t)dt+
g(x_T,P^u\circ x_T^{-1})\right],
\end{equation}
where $h$ and $g$ satisfy (B1)-(B4) above.  \\

\noindent Any $u^*\in\U$ satisfying
\begin{equation}\label{opt-J}
J(u^*)=\min_{u\in\U}J(u)
\end{equation}
is called optimal control. The corresponding optimal dynamics is
given by the probability measure $\widehat P$ on $(\Om,\F)$ defined
by
\begin{equation}\label{opt-P}
dP^*=\mathcal{E}\left(\int_0^{\cdot} \sigma^{-1}(s,x_.)f(s,x_.,
P^*\circ x_s^{-1},u^*_s)dW_s\right)dP,
\end{equation}
under which
\begin{equation}\label{x-opt}
dx_t=f(t,x, P^*\circ x_t^{-1},u^*_t)dt+\sigma(t,x)dW^{u^*}_t,\quad
x_0=\xi\in \R^d.
\end{equation}
\\

\noindent We want to find such an optimal control and characterize the optimal cost functional $J(u^*)$.\\

\noindent For $(t,w,\mu,z,u)\in
[0,T]\times\Om\times\mathcal{P}_2(\R^d)\times\R^d\times U$ we
introduce the Hamiltonian associated with the optimal control
problem (\ref{SDE-u}) and (\ref{J-u})
\begin{equation}\label{ham-u}
H(t,w,\mu,z,u):=h(t,w,\mu,u)+z\cdot\sigma^{-1}(t,w)f(t,w,\mu,u).
\end{equation}
The function $H$ satisfies the following properties.
\begin{lemma} Assume that (A1),(A2), (B1) and (B2) hold. Then, the function $H$ satisfies
\begin{eqnarray}\label{H-u-lip}
|H(t,w,\mu,p,u)-H(t,w,\nu,p,v)|\le
C(1+|w|^{\alpha}_t))(1+|p|)(d(\mu,\nu)+\delta(u,v)).
\end{eqnarray}
Assume further that (B3) holds. Then $H$ satisfies the (stochastic)
Lipschitz condition
\begin{equation}\label{H-u-p}\begin{array}{lll}
|H(t,w,\mu,z,u)-H(t,w,\mu,z^{\prime},u)| \le
C(1+|w|^{1+\alpha}_t)|z-z^{\prime}|).
\end{array}
\end{equation}
\end{lemma}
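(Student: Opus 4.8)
The plan is to prove both estimates by the same elementary route: split the Hamiltonian into its running-cost part $h$ and its drift part $z\cdot\sigma^{-1}f$, estimate each piece using the Lipschitz and growth hypotheses already imposed on $h$, $f$, and $\sigma^{-1}$, and then absorb the resulting polynomial factors in $|w|_t$ into the claimed right-hand sides by elementary inequalities, recalling that $\alpha\ge 0$.

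For the first bound \eqref{H-u-lip} I would write
\[H(t,w,\mu,p,u)-H(t,w,\nu,p,v)=\big(h(t,w,\mu,u)-h(t,w,\nu,v)\big)+p\cdot\sigma^{-1}(t,w)\big(f(t,w,\mu,u)-f(t,w,\nu,v)\big).\]
The first summand is controlled directly by (B2) applied to $\phi=h$, giving $C(d(\mu,\nu)+\delta(u,v))$. For the second summand I would apply the Cauchy--Schwarz inequality, then bound $|\sigma^{-1}(t,w)|$ by $C(1+|w|_t^{\alpha})$ using (A2)(b) and bound $|f(t,w,\mu,u)-f(t,w,\nu,v)|$ by $C(d(\mu,\nu)+\delta(u,v))$ using (B2) applied to $\phi=f$; this produces $C|p|(1+|w|_t^{\alpha})(d(\mu,\nu)+\delta(u,v))$. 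Since $1\le(1+|p|)(1+|w|_t^{\alpha})$ and $|p|(1+|w|_t^{\alpha})\le(1+|p|)(1+|w|_t^{\alpha})$, both summands are dominated by $C(1+|w|_t^{\alpha})(1+|p|)(d(\mu,\nu)+\delta(u,v))$, which is the desired inequality.

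For the second bound \eqref{H-u-p} the $h$-terms cancel, because the measure $\mu$ and the control $u$ are held fixed, leaving
\[H(t,w,\mu,z,u)-H(t,w,\mu,z',u)=(z-z')\cdot\sigma^{-1}(t,w)f(t,w,\mu,u).\]
I would again use Cauchy--Schwarz together with (A2)(b) for $|\sigma^{-1}(t,w)|\le C(1+|w|_t^{\alpha})$ and now the linear-growth hypothesis (B3) for $|f(t,w,\mu,u)|\le C(1+|w|_t)$. The product of the two polynomial factors expands as $1+|w|_t+|w|_t^{\alpha}+|w|_t^{1+\alpha}$; since $\alpha\ge 0$, each of these monomials is $\le 1+|w|_t^{1+\alpha}$, so the whole expression is bounded by $C(1+|w|_t^{1+\alpha})|z-z'|$, as claimed.

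There is no genuine obstacle here; the only point requiring a little care is the bookkeeping of the powers of $|w|_t$. In particular, the asymmetry between the two estimates --- the factor $(1+|w|_t^{\alpha})$ in the first and $(1+|w|_t^{1+\alpha})$ in the second --- traces back to whether one estimates a difference of values of $f$ (which is Lipschitz by (B2) and so needs no growth factor) or a single value of $f$ (which carries the extra factor $(1+|w|_t)$ from (B3)). Keeping this distinction straight, and using $\alpha\ge 0$ to collapse the mixed monomials, is all that the proof needs.
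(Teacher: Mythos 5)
Your proof is correct and follows exactly the route the paper intends: it decomposes $H$ into the $h$-part and the $z\cdot\sigma^{-1}f$ part, controls the first via (B2) and the second via (A2)(b) combined with (B2) (for \eqref{H-u-lip}) or (B3) (for \eqref{H-u-p}), and collapses the monomials in $|w|_t$ using $\alpha\ge 0$. The paper's own proof is just a two-line sketch citing these same assumptions, so your write-up is simply the fully detailed version of the same argument.
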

\begin{proof} Inequality (\ref{H-u-lip}) is a consequence of (A2) and (B2). Assume further that (B3) is satisfied.
Then (\ref{H-u-p}) is also satisfied since $f$ and $\sigma^{-1}$ are
of polynomial growth in $w$.
\end{proof}

\noindent Next, we show that the payoff functional $J(u),\,u\in\U$,
can be expressed by means of solutions of  a linear BSDE.
\begin{proposition}\label{u-bsde} Assume that (A1),(A2), (B1), (B2), (B3) and (B4) are satisfied. Then, for every $u\in\U$, there exists a unique $\Ff$-progressively measurable process $(Y^u,Z^u)\in {\cs}^2_T\times {\ch}^2_T$ such that
\begin{equation}\label{u-yz-bsde}\left\{\begin{array}{ll}
-dY^u_t=H(t,x_.,P^u\circ x_t^{-1},Z^u_t,u_t) dt-Z^u_tdW_t,\quad 0\le t<T,\\
Y^u_T=g(x_T,P^u\circ x_T^{-1}).
\end{array}
\right.
\end{equation}
Moreover, $Y^u_0=J(u)$.
\end{proposition}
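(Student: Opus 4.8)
The plan is to exploit the affine structure of the driver in $z$ and reduce \eqref{u-yz-bsde} to a driverless equation under $P^u$. Fix $u\in\U$ and abbreviate $\bar h_t:=h(t,x_.,P^u\circ x_t^{-1},u_t)$ and $\theta_t:=\sigma^{-1}(t,x_.)f(t,x_.,P^u\circ x_t^{-1},u_t)$, so that by \eqref{ham-u} the generator becomes $H(t,x_.,P^u\circ x_t^{-1},z,u_t)=\bar h_t+z\cdot\theta_t$. Since $P^u$ is frozen, the flow $t\mapsto P^u\circ x_t^{-1}$ is deterministic; hence $\bar h$ is progressively measurable and bounded by (B4), while by (A2)(b) and (B3) the kernel satisfies $|\theta_t|\le C(1+|x|_t^{1+\alpha})$, which together with \eqref{x-estim} yields $E[\int_0^T|\theta_t|^2\,dt]<\infty$. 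Recalling that $dL^u_t=L^u_t\theta_t\,dW_t$ and that $W^u_t=W_t-\int_0^t\theta_s\,ds$ is an $(\Ff,P^u)$-Brownian motion, equation \eqref{u-yz-bsde} is equivalent to
\[
-dY^u_t=\bar h_t\,dt-Z^u_t\,dW^u_t,\qquad Y^u_T=g(x_T,P^u\circ x_T^{-1}),
\]
a backward equation whose generator no longer depends on $(Y^u,Z^u)$.

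For existence I would take $M_t:=E^u[\int_0^T\bar h_s\,ds+g(x_T,P^u\circ x_T^{-1})\mid\F_t]$, which by (B4) is a bounded, hence square-integrable, $(\Ff,P^u)$-martingale. As $W$ has the predictable representation property under $P$, so does $W^u$ under the equivalent measure $P^u$, whence $M_t=M_0+\int_0^tZ^u_s\,dW^u_s$ for some $Z^u$ with $E^u[\int_0^T|Z^u_s|^2\,ds]<\infty$. Setting $Y^u_t:=M_t-\int_0^t\bar h_s\,ds$ produces a continuous, bounded process, and rewriting $dW^u_t=dW_t-\theta_t\,dt$ shows that $(Y^u,Z^u)$ solves \eqref{u-yz-bsde} in its original $W$-driven form. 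Boundedness of $Y^u$ puts it in $\cs^2_T$; to place $Z^u$ in $\ch^2_T$ under the reference measure $P$, apply It\^o's formula to $|Y^u_t|^2$ under $P$, localize to kill the stochastic integral, and bound the cross term by Young's inequality $2|Y^u_t|\,|Z^u_t|\,|\theta_t|\le\tfrac12|Z^u_t|^2+2|Y^u_t|^2|\theta_t|^2$ to obtain
\[
\tfrac12\,E\Big[\int_0^T|Z^u_t|^2\,dt\Big]\le \|g\|_\infty^2+2T\|Y^u\|_\infty\|h\|_\infty+2\|Y^u\|_\infty^2\,E\Big[\int_0^T|\theta_t|^2\,dt\Big]<\infty.
\]
Uniqueness follows by applying the same estimate to the difference of two solutions, the remaining linear term being controlled by Gronwall's inequality.

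To identify $Y^u_0$ with $J(u)$, I would apply It\^o's product rule to $L^u_tY^u_t$. Since $dL^u_t=L^u_t\theta_t\,dW_t$ and $d\langle L^u,Y^u\rangle_t=L^u_t(\theta_t\cdot Z^u_t)\,dt$, the two terms carrying $\theta_t\cdot Z^u_t$ cancel, leaving
\[
d(L^u_tY^u_t)=\big(L^u_tZ^u_t+Y^u_tL^u_t\theta_t\big)\,dW_t-L^u_t\bar h_t\,dt.
\]
Integrating over $[0,T]$ and taking $P$-expectations—after a localization in which the drift is dominated by $\|h\|_\infty\int_0^TL^u_s\,ds$, whose $P$-expectation is $\|h\|_\infty T$ because $E[L^u_s]=1$—gives $Y^u_0=E[L^u_Tg(x_T,P^u\circ x_T^{-1})]+E[\int_0^TL^u_s\bar h_s\,ds]$. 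Using that $E[L^u_T\,\zeta]=E^u[\zeta]$ for bounded $\zeta$ and, for $\F_s$-measurable $\bar h_s$, $E[L^u_s\bar h_s]=E[L^u_T\bar h_s]=E^u[\bar h_s]$, this collapses to $Y^u_0=E^u[\int_0^T\bar h_s\,ds+g(x_T,P^u\circ x_T^{-1})]=J(u)$.

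The \emph{main obstacle} I anticipate is the integrability bookkeeping across the two measures. The construction delivers integrability naturally under $P^u$, whereas the asserted membership $(Y^u,Z^u)\in\cs^2_T\times\ch^2_T$ is measured under $P$; and since $\theta$ has only polynomial growth and \eqref{x-estim} provides only polynomial moments, the density $L^u$ and its inverse possess no exponential moments in general. A direct H\"older transfer of the $Z^u$-norm from $P^u$ to $P$ is therefore unavailable, so both the square-integrability of $Z^u$ under $P$ and the martingale property of the stochastic integrals in the two It\^o computations must be secured by localization together with the $P$-side bound $E[\int_0^T|\theta_t|^2\,dt]<\infty$, rather than by changing measure.
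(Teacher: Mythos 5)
Your existence construction and your identification $Y^u_0=J(u)$ are essentially correct, though they take a longer route than the paper: the paper disposes of existence and uniqueness in one line by citing Theorem I-3 of \cite{Ham-Lepl95} (the hypotheses being exactly the stochastic Lipschitz bound \eqref{H-u-p} and the boundedness of $h$ and $g$), and then obtains $Y^u_0=J(u)$ by rewriting \eqref{u-yz-bsde} against $W^u$ and taking conditional expectation under $P^u$, using that $\F_0$ is $P$-trivial and $P\sim P^u$. Your martingale-representation argument under $P^u$, the localization giving $E\big[\int_0^T|Z^u_t|^2dt\big]<\infty$ under $P$ (which works because $E\big[\int_0^T|\theta_t|^2dt\big]<\infty$ by \eqref{x-estim} and the constructed $Y^u$ is bounded), and the It\^o product rule applied to $L^u_tY^u_t$ are all sound substitutes for these two citations.

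The genuine gap is uniqueness. You assert that ``the remaining linear term is controlled by Gronwall's inequality,'' but this is precisely where the unboundedness of $\theta_t=\sigma^{-1}(t,x_.)f(t,x_.,P^u\circ x_t^{-1},u_t)$ bites. For two solutions in $\cs^2_T\times\ch^2_T$ the difference satisfies $\delta Y_t=\int_t^T\delta Z_s\cdot\theta_s\,ds-\int_t^T\delta Z_s\,dW_s$ with $\delta Y_T=0$, and It\^o plus Young's inequality yields only
\[
E\big[|\delta Y_t|^2\big]\;\le\;E\Big[\int_t^T|\delta Y_s|^2\,|\theta_s|^2\,ds\Big].
\]
Since $|\theta_s|$ is merely bounded by $C(1+|x|_s^{1+\alpha})$, you cannot dominate $E\big[|\delta Y_s|^2|\theta_s|^2\big]$ by $C\,E\big[|\delta Y_s|^2\big]$, so the deterministic Gronwall lemma does not apply; nor can you pull out $\sup_t|\delta Y_t|$, because a competing solution in $\cs^2_T$ need not be bounded --- only the one you constructed is. The natural alternative, observing that $\delta Y$ is a $P^u$-local martingale vanishing at $T$, requires uniform integrability of $\delta Y$ under $P^u$, and that is exactly the measure-transfer obstruction you yourself flagged for existence: $L^u$ has no moments beyond $L^1$ in general, so $\cs^2(P)$-integrability does not transfer to $P^u$. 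To close the gap you should either prove uniqueness in the class of \emph{bounded} $Y$ (there a bounded local martingale with zero terminal value is a true martingale, hence zero, and the $Z$'s then agree through the quadratic variation) and check that your constructed solution lies in that class --- it does --- or simply invoke Theorem I-3 of \cite{Ham-Lepl95} as the paper does, since handling this stochastic-Lipschitz uniqueness is the whole point of that citation.
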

\begin{proof} The mapping $p\mapsto H(t,x_.,P^u\circ x_t^{-1},p,u_t)$ satisfies
(\ref{H-u-p}) and $H(t,x_.,P^u\circ x_t^{-1},0,u_t)=h(t,x_.,u_t)$
and $g(x_T,P^u\circ x_T^{-1})$ are bounded, then by Theorem I-3 in
\cite{Ham-Lepl95}, the BSDE \eqref{u-yz-bsde} has a unique solution.
\medskip

It remains to show that $Y^u_0=J(u)$. Indeed, in terms of the $(\Ff,
P^u)$-Brownian motion
$$
 W_t^u:=W_t-\int_0^t \sigma^{-1}(s,x_.)f(s,x_.,P^u\circ x_s^{-1},u_s)ds, \quad 0\le t\le T,
$$
the process $(Y^u,Z^u)$ satisfies
$$
Y^u_t=g(x_T,P^u\circ x_T^{-1})+\int_t^T h(s,x_.,P^u\circ
x_s^{-1},u_s) ds-\int_t^T Z^u_sdW^u_s,\quad 0\le t\le T.
$$
Therefore,
$$
Y^u_t=E^u\left[\int_t^T h(s,x_.,P^u\circ
x_s^{-1},u_s)ds+g(x_T,P^u\circ x_T^{-1})\big|\F_t\right]\quad
P^u\mbox{-a.s.}
$$
In particular, since $\F_0$ contains only the $P$-null sets of
$\Omega$ and, $P^u$ and $P$ are equivalent, then
$$
Y^u_0=E^u\left[\int_0^T h(s,x_.,P^u\circ
x_s^{-1},u_s)dt+g(x_T,P^u\circ x_T^{-1})\right]=J(u).
$$
\end{proof}

\subsection{Existence of optimal controls}
In the remaining part of this section we want to find $u^*\in\U$
such that $u^*=\arg\min_{u\in\U}J(u)$. A way to find such an optimal
control is to proceed as in Proposition \ref{u-bsde} and introduce a
BSDE whose solution $Y^*$ satisfies
$Y^*_0=\inf_{u\in\U}J(u)=Y^{u^*}_0$. By the comparison theorem for
BSDEs, the problem can be reduced to  minimizing the corresponding
Hamiltonian and the terminal value $g$ w.r.t. the control $u$.
Since  in the Hamiltonian $H(t,x_.,P^u\circ x_t^{-1},z,u_t)$ the marginal law $P^u\circ x_t^{-1}$ of $x_t$ under $P^u$ depends on the whole path of $u$ over $[0,t]$ and not only on $u_t$, we should minimize $H$ w.r.t. the whole set $\U$ of admissible stochastic controls. Therefore, we should take the essential infimum of the Hamiltonian over $\U$, instead of the minimum over $U$. Thus, for the associated BSDE to make sense, we should show that it exists and is progressively measurable. This is shown in the next proposition. \\

Let $\mathbb{L}$ denote the $\sigma$-algebra of progressively
measurable sets on $[0,T]\times\Omega$. For $(t,x_.,z,u)\in
[0,T]\times \C\times \R^d\times \U$, set
\begin{equation}\label{def-H}
H(t,x_.,z,u):=H(t,x_.,P^u\circ x_t^{-1},z,u_t).
\end{equation}
Note that since $H$ is linear in $z$ and a progressively measurable
process, it is an $\mathbb{L}\times B(\R^d)$-random variable.

Next we have:
\begin{proposition}\label{ess-inf} For any $z\in \R^d$, there exists an $\mathbb{L}$-measurable process $H^*(\cdot,\cdot,z)$ such that,
\begin{equation}\label{u-opt-1}
H^*(t,x_.,z)=\mathrm{ess}\inf_{u\in \U}H(t,x_.,z,u),\quad dP \times
dt \mbox{-a.s.}
\end{equation}
Moreover, $H^*$ is stochastic Lipschitz continuous in $z$, i.e., for
every $z,z^{\prime} \in\R^d$,
\begin{equation}\label{H*-lipwaw}
|H^*(t,x_.,z)-H^*(t,x_.,z^{\prime})|\le
C(1+|x|^{1+\alpha}_t)|z-z^{\prime}|.
\end{equation}
\end{proposition}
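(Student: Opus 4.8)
The plan is to construct $H^*$ as an essential infimum over the admissible set $\U$ and then verify two things: that the essential infimum can be realized as an honest $\mathbb{L}$-measurable process, and that the stochastic Lipschitz estimate \eqref{H*-lipwaw} survives the passage to the infimum. The starting observation is that for each fixed $u\in\U$ the process $(t,\omega)\mapsto H(t,x_.,z,u)$ defined in \eqref{def-H} is $\mathbb{L}$-measurable (as noted just before the statement, since $H$ is affine in $z$ and $P^u\circ x_t^{-1}$ is treated as a continuous function of $u$ via Lemma \ref{TV-u}). So I am taking the essential infimum of a family of $\mathbb{L}$-measurable random variables indexed by $u$.

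First I would invoke the standard theory of the essential infimum of a family of measurable functions on a measure space: on $([0,T]\times\Omega,\mathbb{L},dt\times dP)$ the family $\{H(\cdot,\cdot,z,u):u\in\U\}$ admits an essential infimum $H^*(\cdot,\cdot,z)$, unique up to $dt\times dP$-null sets, which is $\mathbb{L}$-measurable by construction, and moreover there is a countable subfamily $(u^n)_{n\ge 1}\subset\U$ with
\begin{equation*}
H^*(t,x_.,z)=\inf_{n\ge 1}H(t,x_.,z,u^n),\quad dt\times dP\text{-a.s.}
\end{equation*}
This realizes \eqref{u-opt-1} and gives $\mathbb{L}$-measurability at once, since a countable infimum of $\mathbb{L}$-measurable processes is $\mathbb{L}$-measurable.

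For the Lipschitz estimate \eqref{H*-lipwaw}, I would work off the pointwise bound \eqref{H-u-p}: for each fixed $u$,
\begin{equation*}
|H(t,x_.,z,u)-H(t,x_.,z',u)|\le C(1+|x|^{1+\alpha}_t)|z-z'|,
\end{equation*}
with a constant $C$ independent of $u$. The key step is then the elementary inequality $|\inf_a \phi(a)-\inf_a \psi(a)|\le \sup_a|\phi(a)-\psi(a)|$ for the essential infimum: applying it with $\phi(u)=H(t,x_.,z,u)$ and $\psi(u)=H(t,x_.,z',u)$ yields
\begin{equation*}
|H^*(t,x_.,z)-H^*(t,x_.,z')|\le \operatorname*{ess\,sup}_{u\in\U}|H(t,x_.,z,u)-H(t,x_.,z',u)|\le C(1+|x|^{1+\alpha}_t)|z-z'|,
\end{equation*}
since the uniform-in-$u$ bound \eqref{H-u-p} controls the supremum. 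This is exactly \eqref{H*-lipwaw}.

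The main obstacle I anticipate is the measurability bookkeeping rather than any estimate: one must be careful that $H^*(\cdot,\cdot,z)$ is measurable jointly as a process (i.e. $\mathbb{L}$-measurable for each $z$) and that the Lipschitz bound holds for a single null-set-independent choice of representatives across $z,z'$. Both are handled by fixing, for each rational $z$, the countable-infimum representative above, verifying the Lipschitz inequality on a common full-measure set for these countably many values, and extending to all $z$ by the Lipschitz continuity itself; the genuinely routine analytic content is then just the two-line comparison argument using \eqref{H-u-p}.
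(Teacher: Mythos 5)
Your proposal is correct and follows essentially the same route as the paper: realize the essential infimum over $\U$ via a countable subfamily for each rational $z$, obtain the stochastic Lipschitz bound \eqref{H*-lipwaw} from the uniform-in-$u$ estimate \eqref{H-u-p} passed through the infimum, and then define $H^*$ for arbitrary $z$ by continuous extension from the rational grid on a common full-measure set. The paper's proof is exactly this construction (with the same final step of checking that the extension still agrees a.e.\ with $\mathrm{ess}\inf_{u\in\U}H(t,x_.,z,u)$ for irrational $z$, which your closing remark covers via the Lipschitz continuity of both sides).
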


\begin{proof}
 For $n\ge 0$ let $z_n\in \mathbb{Q}^d$, the $d$-cube of rational numbers. Then, since $(t,\omega)\mapsto H(t,\omega,z_n,u)$ is $\mathbb{L}$-measurable, its essential infimum w.r.t. $u\in\U$ is well defined i.e. there exists a $\mathbb{L}$-measurable r.v. $H^n$ such that
\begin{equation}\label{H-n}
H^n(t,x_., z_n)=\underset{u\in \U}{\mathrm{ess}\inf\,}
H(t,x_.,z_n,u).
\end{equation}
Moreover, there exists a countable set $\mathcal{J}_n$ of $\U$ such
that
$$
H^n(t,x_.,z_n)=\underset{u\in \mathcal{J}_n}{\inf\,} H(t,x_.,z_n,u),
\quad dP \times dt\mbox{-a.e.}
$$ Finally note that the process  $(t,\omega)\mapsto \underset{u\in \mathcal{J}_n}{\inf\,} H(t,\omega,z_n,u)$ is $\mathbb{L}$-measurable.

Next,  set $N=\bigcup_{n\ge 0} N_n$, where
$$
N_n:=\{(t,\omega):\,\,H^n(t,\omega, z_n)\neq\underset{u\in
\mathcal{J}_n}{\inf\,} H(t,\omega,z_n,u)\}.
$$
Then obviously $dP\otimes dt(N)=0$.

We now define $H^*$ as follows : For $(t,\omega)\in N$, $H^*\equiv
0$ and for $(t,\omega)\in N^c$ (the complement of $N$) we set:
\begin{equation}\label{ess-inf-J}
H^*(t,x_.,z)=\left\{\begin{array}{ll} \underset{u\in
\mathcal{J}_n}{\inf\,} H(t,x_.,z_n,u) & \text{if }\,\,
z=z_n\in\mathbb{Q}^d, \\ \underset{z_n\in \mathbb{Q}^d\to
z}{\lim\,\,} \inf_{u\in \mathcal{J}_n} H(t,x_.,z_n,u) &
\text{otherwise. }
\end{array}
\right.
\end{equation}
The last limit exists due to the fact that, for $n\neq m$, we have
$$
\begin{array}{ll}
|\underset{u\in \mathcal{J}_n}{\inf\,} H(t,x_.,z_n,u) -\underset{u\in \mathcal{J}_m}{\inf\,} H(t,x_.,z_m,u)|\\
\quad =|H^n(t,x_.,z_n)-H^m(t,x_.,z_m)|
=|\underset{u\in \U}{\mathrm{ess}\inf\,} H(t,x_.,z_n,u)-\underset{u\in \U}{\mathrm{ess}\inf\,} H(t,x_.,z_m,u)|\\
 \quad \le \underset{u\in \U}{\mathrm{ess}\inf\,}|\sigma^{-1}(t,x_.)b(t,x_.,P^u\circ x_t^{-1},u_t)||z_n-z_m|\\\quad \le C(1+|x|_t^{\alpha+1})|z_n-z_m|.
\end{array}
$$
Furthermore, the last inequality implies that the limit does not
depend on the sequence $(z_n)_{n\geq 0}$ of
$\mathbb{Q}^d$ which converges to $z$. Finally note that $H^*(t,x_.,z)$ is $\mathbb{L}\otimes B(\R^d)$-measurable and is Lipschitz-continuous in $z$ with the stochastic Lipschitz constant $C(1+|x|_t^{\alpha+1})$.\\

\noindent  It remains to show that, for every $z\in\R^d$,
\begin{equation}\label{H-*}
H^*(t,x_.,z)=\underset{u\in \U}{\mathrm{ess}\inf\,}
H(t,x_.,z,u),\quad dP \times dt\mbox{-a.e.}
\end{equation}
If $z\in\mathbb{Q}^d$, the equality follows from the definitions
\eqref{H-n} and \eqref{ess-inf-J}. Assume $z\notin\mathbb{Q}^d$ and
let $z_n\in\mathbb{Q}^d$ such that $z_n\to z$. Then
\begin{equation}\label{H-*}
H^*(t,x_.,z_n)=\underset{u\in \U}{\mathrm{ess}\inf\,}
H(t,x_.,z_n,u),\quad dP \times dt\mbox{-a.e.}
\end{equation}
But, $H^*(t,x_.,z_n)=\underset{u\in \mathcal{J}_n}{\inf\,}
H(t,x_.,z_n,u)\rw_n H^*(t,x_.,z)$ and \\$\underset{u\in
\U}{\mathrm{ess}\inf\,} H(t,x_.,z_n,u)\rw_n\underset{u\in
\U}{\mathrm{ess}\inf\,} H(t,x_.,z,u)$ which finishes the proof.
\end{proof}

Consider further the $\F_T$-measurable random variable
\begin{equation}\label {u-opt-2}
g^*(x_.):=\mathrm{ess}\inf_{u\in \U} g(x_T,P^u\circ x_T^{-1})
\end{equation}
and let $(Y^*,Z^*)\in {\cs}^2_T\times {\ch}^2_T$ be the solution of
the following BSDE
\begin{equation}\label{*-bsde}
Y^*_t=g^*(x_.)+\int_t^T H^*(s,x_.,Z_s^*)ds-\int_t^TZ^*_sdW_s,\,\,
t\leq T.
\end{equation}

The existence of the pair $(Y^*,Z^*)$ follows from the boundedness
of $g^*$ and $h$, the measurability of $H^*$ and (\ref{H*-lipwaw})
(see \cite{Ham-Lepl95} for more details).

\medskip
The next proposition displays a comparison result between the
solutions $Y^*$  and $Y^u,\,u\in\U$ of the BSDEs \eqref{*-bsde} and
\eqref{u-yz-bsde}, respectively.
 \begin{proposition}[Comparison] For every $t\in[0,T]$, we have
 \begin{equation}\label{y*-y-u}
 Y^*_t\le Y^u_t,\quad P\text{-a.s.},\quad u\in\U.
 \end{equation}
\end{proposition}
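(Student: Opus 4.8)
The plan is to prove the comparison by linearizing the difference of the two BSDEs and reducing to a Girsanov change of measure that is already available from the construction of $P^u$. Fix $u\in\U$ and set $\bar Y := Y^u - Y^*$ and $\bar Z := Z^u - Z^*$. Subtracting \eqref{*-bsde} from \eqref{u-yz-bsde} (written with the shorthand \eqref{def-H}) gives
\[
-d\bar Y_t = \bigl[H(t,x_.,Z^u_t,u) - H^*(t,x_.,Z^*_t)\bigr]\,dt - \bar Z_t\,dW_t,\qquad \bar Y_T = g(x_T,P^u\circ x_T^{-1}) - g^*(x_.).
\]

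First I would linearize the driver difference. Since, by \eqref{ham-u} and \eqref{def-H}, the map $z\mapsto H(t,x_.,z,u)$ is affine with slope $\beta_t := \sigma^{-1}(t,x_.)f(t,x_.,P^u\circ x_t^{-1},u_t)$, I can split
\[
H(t,x_.,Z^u_t,u) - H^*(t,x_.,Z^*_t) = \beta_t\cdot\bar Z_t + \gamma_t,\qquad \gamma_t := H(t,x_.,Z^*_t,u) - H^*(t,x_.,Z^*_t).
\]
Here $\gamma_t\ge 0$ $dP\times dt$-a.e.: by \eqref{u-opt-1} one has $H^*(t,x_.,z)\le H(t,x_.,z,u)$ for each rational $z$ on a set of full measure, and since both sides are Lipschitz in $z$ (by \eqref{H-u-p} and \eqref{H*-lipwaw}) the inequality extends by continuity to the random argument $z=Z^*_t$. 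Likewise $\bar Y_T\ge 0$ by \eqref{u-opt-2}.

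The key observation is that $\beta$ is precisely the Girsanov drift defining $P^u$: by \eqref{P-u-density} the process $W^u_t = W_t - \int_0^t\beta_s\,ds$ is an $(\Ff,P^u)$-Brownian motion. Rewriting the linearized equation in terms of $W^u$ absorbs the term $\beta_t\cdot\bar Z_t$, yielding
\[
\bar Y_t = \bar Y_T + \int_t^T\gamma_s\,ds - \int_t^T\bar Z_s\,dW^u_s.
\]
Taking the $P^u$-conditional expectation given $\F_t$ and using $\bar Y_T\ge 0$, $\gamma\ge 0$, I obtain $\bar Y_t = E^u[\bar Y_T + \int_t^T\gamma_s\,ds\mid\F_t]\ge 0$; since $P^u\sim P$, this is exactly $Y^*_t\le Y^u_t$, $P$-a.s.

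The main obstacle is integrability: because $|\beta_t|\le C(1+|x|_t^{1+\alpha})$ is only of polynomial growth, the drivers are stochastic---not uniformly---Lipschitz in $z$, so I must check that $\int_t^T\bar Z_s\,dW^u_s$ is a true $P^u$-martingale rather than merely a local one before taking conditional expectations. I would dispatch this by localizing along stopping times reducing the local martingale and passing to the limit via the a priori bounds $(Y^u,Z^u),(Y^*,Z^*)\in\cs^2_T\times\ch^2_T$ together with the moment estimate \eqref{u-x-estim}; this is exactly the setting of the stochastic-Lipschitz BSDE theory of \cite{Ham-Lepl95}, whose comparison theorem may alternatively be invoked directly once the two hypotheses $g^*\le g(x_T,P^u\circ x_T^{-1})$ and $H^*(\cdot,\cdot,z)\le H(\cdot,\cdot,z,u)$ have been verified as above.
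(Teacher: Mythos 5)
Your proof is correct and follows essentially the same route as the paper's: subtract the two BSDEs, split the driver difference into a sign-definite term coming from the essential infimum plus a term linear in $\bar Z$, absorb the linear term by the Girsanov change of measure (which you rightly identify as precisely the passage to $P^u$), and conclude by taking conditional expectation under the equivalent measure. Your additional care in extending the inequality $H^*\le H(\cdot,\cdot,z,u)$ from rational $z$ to the random argument $Z^*_t$ and in justifying the martingale property via localization fills in details the paper leaves implicit, but it is the same argument.
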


\begin{proof} For any $t\leq T$, we have:
$$
\begin{array}{lll}
Y^*_t-Y^u_t=g^*(x_.)-g(x_T,P^u\circ x_T^{-1})-\int_t^T
(Z^*_s-Z^u_s)dW_s\\ \qquad\qquad\qquad +\int_t^T
\{H^*(s,x_.,Z_s^*)-H(s,x_.,Z^*_s,u)\} ds\\ \qquad\qquad\qquad
+\int_t^T \{H(s,x_.,Z^*_s,u) -H(s,x_.,Z^u_s,u)\} ds.
\end{array}
$$
Since, $g^*(x_.)-g(x_T,P^u\circ x_T^{-1})\le 0$ and
$H^*(s,x_.,Z_s^*)-H(t,x_.,Z^*_s,u)\le 0$, then, performing a change
of probability measure and taking conditional expectation w.r.t.
$\F_t$, we obtain  $Y^*_t\le Y^u_t,\,\, P\text{-a.s.},\,\, \forall
u\in\U$.
\end{proof}

\begin{proposition}[$\e$-optimality] Assume that for any $\e>0$ there exists $u^{\e}\in \U$ such that $P$-a.s.,
\begin{equation}\label{e-*-H-g}
\left\{ \begin{array}{ll} H^*(t,x_.,Z^*_t)\ge
H(t,x_.,Z^*_t,u^{\e})-\e, \quad 0\le t<T, \\ g^*(x_.)\ge
g(x_T,P^{u^{\e}}\circ x_T^{-1})-\e.
\end{array}
\right.
\end{equation}
Then,
\begin{equation}\label{*-e-opt}
Y^*_t=\underset{u\in\U}{\mathrm{ess}\inf\,} Y^u_t,\quad 0\le t\le T.
\end{equation}
\end{proposition}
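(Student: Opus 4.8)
The plan is to combine the comparison result \eqref{y*-y-u} with the $\e$-optimality hypotheses \eqref{e-*-H-g} so as to sandwich $Y^*$ between the essential infimum and an arbitrarily small perturbation of it. From the preceding Comparison Proposition we already have $Y^*_t\le Y^u_t$ $P$-a.s.\ for every $u\in\U$, hence $Y^*_t\le \mathrm{ess}\inf_{u\in\U}Y^u_t$. It therefore remains to establish the reverse inequality, and for this I would estimate $Y^{u^{\e}}_t-Y^*_t$ for the control $u^{\e}$ supplied by \eqref{e-*-H-g}, showing that it is nonnegative and bounded above by a constant multiple of $\e$.

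To do this, subtract the BSDE \eqref{u-yz-bsde} written for $u=u^{\e}$ from \eqref{*-bsde}. Writing $\Delta Y_t:=Y^{u^{\e}}_t-Y^*_t$ and $\Delta Z_t:=Z^{u^{\e}}_t-Z^*_t$, the difference solves a BSDE with terminal value $g(x_T,P^{u^{\e}}\circ x_T^{-1})-g^*(x_.)$ and generator $H(t,x_.,Z^{u^{\e}}_t,u^{\e})-H^*(t,x_.,Z^*_t)$. I would decompose this generator (recalling \eqref{def-H}) as
$$\big[H(t,x_.,Z^{u^{\e}}_t,u^{\e})-H(t,x_.,Z^*_t,u^{\e})\big]+\big[H(t,x_.,Z^*_t,u^{\e})-H^*(t,x_.,Z^*_t)\big].$$
The crucial observation is that $H$ is affine in $z$ by \eqref{ham-u}, so the first bracket equals $\Delta Z_t\cdot\sigma^{-1}(t,x_.)f(t,x_.,P^{u^{\e}}\circ x_t^{-1},u^{\e}_t)$; that is, the coefficient multiplying $\Delta Z_t$ is precisely the Girsanov drift that defines $W^{u^{\e}}$. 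The second bracket, call it $R_t$, is nonnegative because $H^*$ is the essential infimum, and is bounded above by $\e$ by the first line of \eqref{e-*-H-g}. Rewriting the $dW_t$ term in terms of the $(\Ff,P^{u^{\e}})$-Brownian motion $W^{u^{\e}}$ exactly absorbs the first bracket, leaving
$$\Delta Y_t=\big(g(x_T,P^{u^{\e}}\circ x_T^{-1})-g^*(x_.)\big)+\int_t^T R_s\,ds-\int_t^T \Delta Z_s\,dW^{u^{\e}}_s.$$

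Taking conditional expectation with respect to $\F_t$ under $P^{u^{\e}}$ then kills the stochastic integral, giving $\Delta Y_t=E^{u^{\e}}\big[(g(x_T,P^{u^{\e}}\circ x_T^{-1})-g^*(x_.))+\int_t^T R_s\,ds\,\big|\,\F_t\big]$. Using $0\le R_s\le\e$ together with $0\le g(x_T,P^{u^{\e}}\circ x_T^{-1})-g^*(x_.)\le\e$ (the latter from the second line of \eqref{e-*-H-g} and the definition \eqref{u-opt-2} of $g^*$), I obtain $0\le Y^{u^{\e}}_t-Y^*_t\le \e(1+T)$, $P$-a.s. Consequently $\mathrm{ess}\inf_{u\in\U}Y^u_t\le Y^{u^{\e}}_t\le Y^*_t+\e(1+T)$, and letting $\e\downarrow 0$ yields $\mathrm{ess}\inf_{u\in\U}Y^u_t\le Y^*_t$, which combined with the comparison inequality gives \eqref{*-e-opt}. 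The one point that I expect to require care is the justification that $\int_t^{\cdot}\Delta Z_s\,dW^{u^{\e}}_s$ is a genuine $P^{u^{\e}}$-martingale, and not merely a local martingale, so that its conditional expectation indeed vanishes; this should follow from the membership of $Z^{u^{\e}},Z^*$ in the (appropriately weighted) space $\ch^2_T$ combined with the uniform moment bound \eqref{K} on $P^{u^{\e}}$, via a standard localization and uniform-integrability argument.
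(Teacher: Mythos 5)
Your proof is correct and takes essentially the same approach as the paper: the paper likewise forms the difference of the BSDEs for $Y^*$ and $Y^{u^{\e}}$, splits the generator difference into the part bounded by $\e$ via \eqref{e-*-H-g} and the part linear in $\Delta Z$ (which it absorbs by a Girsanov change of measure), takes conditional expectation to get $Y^*_t\ge Y^{u^{\e}}_t-\e(T+1)$, and concludes by combining with the comparison inequality \eqref{y*-y-u}. Your explicit care about the stochastic integral being a true martingale under the new measure is a detail the paper leaves implicit, but it is not a difference in method.
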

\begin{proof} Let $(Y^{\e},Z^{\e})\in {\cs}^2_T\times {\ch}^2_T$ be the solution the following BSDE
$$
Y^{\e}_t=g(x_T,P^{u^{\e}}\circ x_T^{-1})+\int_t^T
H(s,x_.,Z^{\e}_s,u^{\e})ds-\int_t^T Z_s^{\e}dW_s.
$$ Once more the existence of $(Y^{\e},Z^{\e})$ follows from (\cite{Ham-Lepl95}, Theorem I.3).
We then have
$$
\begin{array}{lll}
Y^*_t-Y^{\e}_t=g^*(x_.)-g(x_T,P^{u^{\e}}\circ x_T^{-1})-\int_t^T
(Z^*_s-Z^{\e}_s)dW_s\\ \qquad\qquad +\int_t^T
\{H^*(s,x_.,Z^*)-H(s,x_.,Z^*_s,u^{\e})\} ds\\ \qquad\qquad +\int_t^T
\{H(s,x_.,Z^*_s,u^{\e}) -H(s,x_.,Z^{\e}_s,u^{\e})\} ds.
\end{array}
$$
Since $g^*(x_.)-g(x_T,P^{u^{\e}}\circ x_T^{-1})\ge -\e $ and
$H^*(s,x_.,Z^*)-H(t,x_.,Z^*_s,u^{\e})\ge -\e$, then, once more,
performing a change of probability measure and taking conditional
expectation w.r.t. $\F_t$, we obtain   $Y^*_t\ge Y^{u^{\e}}_t
-\e(T+1)$. This entails that, in view of (\ref{y*-y-u}), for every
$0\le t\le T$, $Y^*_t=\underset{u\in\U}{\mathrm{ess}\inf\,}Y^u_t$ .
\end{proof}

In next theorem, we characterize the set of optimal controls
associated with  (\ref{opt-J}) under the dynamics (\ref{SDE-u}).
\begin{theorem}[Existence of optimal control]\label{opt-BSDE} If there exists $u^*\in\U$ such that
\begin{equation}\label{*-H-g-opt}
\left\{ \begin{array}{ll}  H^*(t,x_.,Z^*_t)=H(t,x_.,P^{u^*}\circ
x^{-1}_t,Z^*_t,u^*),\quad dP\times dt\mbox{-a.e.},\quad 0\le t< T,
\\
g^*(x_.)=g(x_T,P^{u^*}\circ x^{-1}_T), \quad dP\mbox{-a.s.}
\end{array}
\right.
\end{equation}
Then, \begin{equation}\label{Y-opt}
Y^*_t=Y^{u^*}_t=\mathrm{ess}\inf_{u\in\U} Y^u_t,\quad 0\le t\le T.
\end{equation}
In particular, $Y_0^*=\inf_{u\in\U}J(u)=J(u^*)$.
\end{theorem}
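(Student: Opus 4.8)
The plan is to show that the hypothesis (\ref{*-H-g-opt}) forces $(Y^*,Z^*)$ to solve exactly the same BSDE as $(Y^{u^*},Z^{u^*})$, and then to invoke uniqueness together with the comparison inequality (\ref{y*-y-u}). First I would write the $*$-BSDE (\ref{*-bsde}) in integral form and substitute the two identities of (\ref{*-H-g-opt}) directly into it. Since $g^*(x_.)=g(x_T,P^{u^*}\circ x^{-1}_T)$ and $H^*(s,x_.,Z^*_s)=H(s,x_.,Z^*_s,u^*)$ hold $dP\times dt$-a.e.\@ along the trajectory of $Z^*$, the pair $(Y^*,Z^*)$ satisfies
\begin{equation*}
Y^*_t=g(x_T,P^{u^*}\circ x^{-1}_T)+\int_t^T H(s,x_.,Z^*_s,u^*)\,ds-\int_t^T Z^*_s\,dW_s,\quad 0\le t\le T,
\end{equation*}
which is precisely the BSDE (\ref{u-yz-bsde}) associated with the control $u^*$.

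The crucial and somewhat delicate point is that a BSDE solution depends on its generator only through the values of that generator evaluated along its own control process. Although the two maps $z\mapsto H^*(s,x_.,z)$ and $z\mapsto H(s,x_.,z,u^*)$ differ as functions of $z$, the pointwise identity $H^*(s,x_.,Z^*_s)=H(s,x_.,Z^*_s,u^*)$ along $Z^*$ is all that is needed to reinterpret (\ref{*-bsde}) as the $u^*$-BSDE; I expect this to be the one step where care is required, so as not to confuse equality of the generators with equality of their values along $Z^*$.

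I would then appeal to uniqueness. The generator $z\mapsto H(s,x_.,z,u^*)$ is stochastic-Lipschitz by (\ref{H-u-p}) and the terminal value $g(x_T,P^{u^*}\circ x^{-1}_T)$ is bounded by (B4), so, by the well-posedness used in Proposition \ref{u-bsde} (Theorem I-3 in \cite{Ham-Lepl95}), the BSDE with this data has a unique solution in ${\cs}^2_T\times{\ch}^2_T$. Since both $(Y^*,Z^*)$ and $(Y^{u^*},Z^{u^*})$ solve it, they coincide; in particular $Y^*_t=Y^{u^*}_t$ for every $t$, $P$-a.s. Combining this with the comparison inequality (\ref{y*-y-u}), namely $Y^*_t\le Y^u_t$ for all $u\in\U$, and noting that the admissible control $u^*$ attains this lower bound, yields
\begin{equation*}
Y^*_t=Y^{u^*}_t=\underset{u\in\U}{\mathrm{ess}\inf\,}Y^u_t,\quad 0\le t\le T,
\end{equation*}
which is (\ref{Y-opt}).

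For the final assertion I would evaluate at $t=0$. By Proposition \ref{u-bsde} we have $Y^{u^*}_0=J(u^*)$, and since $\F_0$ contains only the $P$-null sets each $Y^u_0$ is deterministic, so $\mathrm{ess}\inf_{u\in\U}Y^u_0=\inf_{u\in\U}Y^u_0=\inf_{u\in\U}J(u)$. Hence $Y^*_0=J(u^*)=\inf_{u\in\U}J(u)$, which shows that $u^*$ is optimal. Apart from the reinterpretation of (\ref{*-bsde}) discussed above, every step is a direct application of BSDE uniqueness, the comparison result, and Proposition \ref{u-bsde}.
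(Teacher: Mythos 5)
Your proof is correct, but it reaches the key identity $Y^*=Y^{u^*}$ by a genuinely different route than the paper. The paper's proof computes the difference $Y^*_t-Y^{u^*}_t$ directly: using the hypothesis \eqref{*-H-g-opt} to kill the terminal and driver-infimum terms, it exploits the linearity of $H$ in $z$ to write the remaining driver difference as $(Z^*_s-Z^{u^*}_s)\sigma^{-1}(s,x_.)f(s,x_.,P^{u^*}\circ x_s^{-1},u^*_s)$, performs a Girsanov change of measure under which $Y^*-Y^{u^*}$ becomes a $\tilde P$-martingale with zero terminal value, and concludes $\tilde E[Y^*_t-Y^{u^*}_t]=0$; crucially, the paper then needs the comparison inequality \eqref{y*-y-u} a second time, since only a random variable that is nonpositive a.s.\ \emph{and} has zero $\tilde P$-expectation must vanish. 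Your argument instead substitutes \eqref{*-H-g-opt} into the integral form of \eqref{*-bsde} to recognize $(Y^*,Z^*)$ as a solution of the $u^*$-BSDE \eqref{u-yz-bsde}, and then invokes the uniqueness part of Theorem I-3 of \cite{Ham-Lepl95} (already used in Proposition \ref{u-bsde}) to get $(Y^*,Z^*)=(Y^{u^*},Z^{u^*})$. This is legitimate: a BSDE solution sees its driver only through the values along $(s,Z_s)$, which is exactly what the hypothesis supplies, and both pairs lie in ${\cs}^2_T\times{\ch}^2_T$, the class in which uniqueness holds for the stochastic-Lipschitz driver \eqref{H-u-p} with bounded terminal data (B4). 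What each approach buys: the paper's computation is self-contained, essentially inlining the linearization-plus-Girsanov proof of uniqueness rather than citing it; yours is shorter, yields $Z^*=Z^{u^*}$ as a by-product, and uses the comparison result \eqref{y*-y-u} only at the final step to identify $Y^{u^*}$ with the essential infimum, not in deriving $Y^*=Y^{u^*}$ itself. Your handling of the last assertion (that $Y^u_0$ is deterministic because $\F_0$ is $P$-trivial, so the essential infimum at $t=0$ is the infimum of $J(u)$) is also correct and slightly more explicit than the paper's.
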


\begin{proof} Under (\ref{*-H-g-opt}), for any $t\leq T$ we have
$$
\begin{array}{lll}
Y^*_t-Y^{u^*}_t=\int_t^T (Z^*_s-Z^{u^*}_s)dW_s+\int_t^T
\{H(s,x_.,P^{u^*}\circ x^{-1}_t,Z^*_s,u^{*}) -H(s,x_.,P^{u^*}\circ
x^{-1}_t,Z^{u^*}_s,u^{*})\} ds\\ \qquad\qquad = \int_t^T
(Z^*_s-Z^{u^*}_s)dW_s+\int_t^T
(Z^*_s-Z^{u^*}_s)\sigma^{-1}(s,x.)f(s,x_.,P^{u^*}\circ
x^{-1}_s,u^{*})ds .
\end{array}
$$
Making now a change of probability and taking expectation leads to
$\tilde E[Y^*_t-Y^{u^*}_t]=0$, $\forall t\leq T$ where $\tilde E$ is
the expectation under the new probability $\tilde P$ which is
equivalent to $P$. As $Y^*_t\leq Y^{u^*}_t$, $ P$-a.s. and then
$\tilde P$-a.s., we obtain, in taking into account of
(\ref{y*-y-u}), $Y^*=Y^{u^*}$ which means, once more by
(\ref{y*-y-u}), that $u^*$ is an optimal strategy.\end{proof}

\begin{remark}
As is the case for any optimality criteria for systems, obviously
checking the sufficient condition \eqref{*-H-g-opt} is quite hard
simply because there are no general conditions which guarantee
existence of essential minima for systems.  One should rather solve
the problem in particular cases. In the special case where the
marginal law $P^u\circ x^{-1}_t$ {\it only depends} on $(u_t, x_.)$
at each time $t\in [0,T]$,  we may minimize $H$ and $g$ over the
action set $U$, instead of using the essential infimum, and use
Bene\v{s} selection theorem \cite{Benes} to find two measurable
functions $u_1^*$ from $[0,T)\times\Om\times\R^d$ into $U$ and
$u_2^*$ from $\R^d$ into $U$ such that
\begin{equation}
\label{definf}H^*(t,x_.,z):=\inf_{u\in U}H(t,x_.,P^u\circ
x_t^{-1},z,u)= H(t,x,P^{u_1^*}\circ x_t^{-1},z,u_1^*(t,x,z))
\end{equation}  and
\begin{equation}\label {u-opt-2}
g^*(x_.):=\inf_{u\in U} g(x_T,P^u\circ
x_T^{-1})=g(x_T,P^{u_2^*}\circ x_T^{-1}).
\end{equation}
Combining (\ref{definf}) and (\ref{u-opt-2}), it is easily seen that
the progressively measurable function $u^*$ defined by
\begin{equation}\label{u-opt-hat1}
\widehat u(t,x_.,z):=\left\{\begin{array}{ll} u_1^*(t,x_.,z), \quad
t<T,\\ u_2^*(x_T),\quad t=T,
\end{array}
\right.
\end{equation}
satisfies
\begin{equation}\label{opt}
H^*(t,x_.,z)=H(t,x_.,P^{\widehat u}\circ x_t^{-1},z,\widehat u)
\quad  \text{and} \quad   g^*(x_.)=g(x_T,P^{\widehat u}\circ
x_T^{-1}).
\end{equation}
\end{remark}

\subsection{Existence of nearly-optimal controls}

As noted above, the sufficient condition \eqref{*-H-g-opt} is quite
hard to verify in concrete situations, which makes Theorem
\eqref{opt-BSDE} less useful for showing existence of optimal
controls. Nevertheless,
near-optimal controls enjoy many useful and desirable properties that optimal controls do not have.
In fact, thanks to Ekeland's variational principle \cite{Ekeland}, that we will use below, under
very mild conditions on the control set $\U$ and the payoff functional $J$, near-optimal
controls always exist while optimal controls may not exist or are difficult to establish.
Moreover, there are many candidates for near-optimal controls which makes it possible
to select among them appropriate ones that are easier to implement and handle both analytically
and numerically. \\

We introduce the Ekeland metric $d_E$ on the space
$\U$ of admissible controls defined as follows. For $u, v\in \U$,
\begin{equation}\label{E-distance}
d_E(u,v):=\widehat P\{(\omega,t)\in\Omega\times[0,T],\,\,
\delta(u_t(\omega),v_t(\omega))>0\},
\end{equation}
where $\widehat P$ is the product measure of $P$ and the Lebesgue
measure on $[0,T]$.\\

In our proof of existence of near-optimal controls, we need $L^p$-boundedness of the Girsanov density $L^u$ for some $p>1$, which, accoding to Theorem 2.2 in \cite{ugh},  is achieved  under the following
assumption on $\sigma$ which will replace (A2)-(b),(c).\\

\noindent {\bf Assumption} (A6): $\sigma (t,x_.)$ and
$\sigma^{-1}(t,x_.)$ are bounded.\\

We have
\begin{lemma}\label{ekeland}
\begin{itemize}
\item[$(i)$] $d_E$ is a distance. Moreover, $(\U,d_E)$  is a complete metric space.
 \item[$(ii)$]  Let $(u^n)_n$ and $u$  be in $\U$. If $d_E(u^n,u)\to 0$ then $\E[\int_0^T\delta^2(u^n_t,u_t)dt]\to 0$.
\end{itemize}
\end{lemma}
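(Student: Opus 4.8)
The plan is to dispatch (ii) first, since it is immediate from the compactness of $U$, and then to establish (i) by checking the metric axioms and proving completeness through a Borel--Cantelli argument.

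For (ii), since $(U,\delta)$ is compact its diameter $D:=\sup_{x,y\in U}\delta(x,y)$ is finite, so for any $u,v\in\U$ one has the pointwise bound $\delta^2(u_t(\omega),v_t(\omega))\le D^2\,\mathbf{1}_{\{\delta(u_t(\omega),v_t(\omega))>0\}}$: both sides vanish where $\delta=0$, and elsewhere the left side is at most $D^2$. Integrating against the finite product measure $\widehat P=P\otimes\mathrm{Leb}_{[0,T]}$ and using Fubini gives $\E[\int_0^T\delta^2(u_t,v_t)\,dt]\le D^2\,d_E(u,v)$. Hence $d_E(u^n,u)\to 0$ forces $\E[\int_0^T\delta^2(u^n_t,u_t)\,dt]\to 0$, which is exactly (ii).

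For (i), I would first verify that $d_E$ is a metric on $\U$, viewed as equivalence classes of processes that agree $\widehat P$-a.e. Nonnegativity and symmetry are clear, while $d_E(u,v)=0$ says $\delta(u_t(\omega),v_t(\omega))=0$ for $\widehat P$-a.e.\ $(\omega,t)$, i.e.\ $u=v$ in $\U$. The triangle inequality follows from the set inclusion $\{\delta(u_t,w_t)>0\}\subseteq\{\delta(u_t,v_t)>0\}\cup\{\delta(v_t,w_t)>0\}$ (if $u_t=v_t$ and $v_t=w_t$ then $u_t=w_t$) together with subadditivity of $\widehat P$. The sets involved are measurable because the controls are jointly measurable in $(t,\omega)$ and $\delta$ is continuous.

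The substantive point is completeness. Given a $d_E$-Cauchy sequence $(u^n)$, I would extract a subsequence with $d_E(u^{n_k},u^{n_{k+1}})\le 2^{-k}$, so that $\sum_k\widehat P\{\delta(u^{n_k}_t,u^{n_{k+1}}_t)>0\}<\infty$. By Borel--Cantelli, for $\widehat P$-a.e.\ $(\omega,t)$ one has $u^{n_k}_t(\omega)=u^{n_{k+1}}_t(\omega)$ for all large $k$, so the sequence $(u^{n_k}_t(\omega))_k$ is eventually constant and in particular convergent in $U$. I define $u_t(\omega)$ to be this limit where it exists and a fixed reference point otherwise; as a $\widehat P$-a.e.\ pointwise limit of progressively measurable $U$-valued processes, $u$ is itself (after modification on a null set) progressively measurable and $U$-valued, hence $u\in\U$. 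The eventual constancy yields $\mathbf{1}_{\{\delta(u^{n_k}_t,u_t)>0\}}\to 0$ $\widehat P$-a.e., and dominated convergence (integrand bounded by $1$, $\widehat P$ finite) gives $d_E(u^{n_k},u)\to 0$; combined with the Cauchy property this upgrades to $d_E(u^n,u)\to 0$ for the whole sequence. The main obstacle I anticipate is the measurability bookkeeping ensuring $u\in\U$, but the Borel--Cantelli step sidesteps any delicate convergence-in-measure analysis by delivering genuine a.e.\ eventual constancy, which makes both the measurability and the passage to the limit transparent.
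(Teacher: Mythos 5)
Your proposal is correct, and it is genuinely more self-contained than the paper's treatment, which handles part $(i)$ by citing Elliott--Kohlmann for the metric and completeness properties (remarking only that completeness of $(U,\delta)$ is what is needed) and handles part $(ii)$ by arguing that $d_E(u^n,u)\to 0$ forces $\delta(u^n_t,u_t)\to 0$ $dP\times dt$-a.e.\ and then invoking boundedness (compactness of $U$) plus dominated convergence. Your proof of $(ii)$ via the pointwise bound $\delta^2(u^n_t,u_t)\le D^2\,\mathbf{1}_{\{\delta(u^n_t,u_t)>0\}}$ and the resulting quantitative estimate $\E\bigl[\int_0^T\delta^2(u^n_t,u_t)\,dt\bigr]\le D^2\,d_E(u^n,u)$ is cleaner and in fact repairs a small imprecision in the paper: convergence $d_E(u^n,u)\to 0$ is convergence in $\widehat P$-measure of the disagreement indicators, which yields a.e.\ convergence only along subsequences, not for the full sequence as the paper asserts (the paper's conclusion survives via the standard subsequence principle, but your bound avoids the issue entirely and makes the lemma quantitative). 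Your Borel--Cantelli proof of completeness is also correct and reveals something the paper's remark obscures: because $d_E$ only registers \emph{whether} values differ, a Cauchy sequence has a subsequence that is $\widehat P$-a.e.\ eventually constant, so completeness of $(U,\delta)$ is not actually needed; only the measurability bookkeeping matters, and your device of defining the limit on the (progressively measurable) convergence set and a fixed point elsewhere handles it without even appealing to completeness of the filtration. The one phrase to tighten is ``after modification on a null set'': modifying a process on a $\widehat P$-null set does not automatically preserve progressive measurability, but your explicit construction (limit where it exists, reference point otherwise) already produces a progressively measurable process directly, so no modification is required.
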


\begin{proof}
For a proof of $(i)$, see \cite{Elliott}. The proof of completeness of $(\U,d_E)$ needs only completeness of the metric space $(U,\delta)$.\\
$(ii)$ Let $(u^n)_n$ and $u$ be in $\U$. Then, by definition of the
distance $d_E$,  since $d_E(u^n,u)\to 0$ then $\delta(u^n_t,u_t)$
converges to 0, $dP\times dt$-a.e. Now, since the set $U$ is
compact, the sequence $\delta(u^n,u)$ is  bounded. Thus, by
dominated convergence, we have $\E[\int_0^T\delta^2(u^n_t,u_t)dt]\to
0$.
\end{proof}

\begin{proposition}\label{D-conv}
Assume (A1), (A2)-(a),(A6) and (B1)-(B4). Let $(u^n)_n$ and $u$ be
in $\U$. If $d_E(u^n,u)\to 0$ then $D^2_T(P^{u^n},P^u)\to 0$.
Moreover, for every $t\in[0,T]$, $L^{u^n}_t$ converges to $L^{u}_t$
in $L^1(P)$.
\end{proposition}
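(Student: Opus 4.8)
The plan is to reduce the first claim to the total-variation estimate of Lemma \ref{TV-u} and then to cope with the fact that the bound \eqref{TV-uv-1} is written under the moving measure $P^{u^n}$ rather than under the reference measure $P$. First I would apply \eqref{TV-uv-1} to the pair $(u^n,u)$ and rewrite the expectation via the change of measure \eqref{P-u}:
\[
D_T^2(P^{u^n},P^u)\le C\,E^{u^n}\Big[\int_0^T\delta^2(u^n_t,u_t)\,dt\Big]=C\,E\Big[L^{u^n}_T\int_0^T\delta^2(u^n_t,u_t)\,dt\Big].
\]
The right-hand side now carries the Girsanov density $L^{u^n}_T$, which depends on $n$, so I cannot invoke Lemma \ref{ekeland}$(ii)$ directly, since that lemma only controls the \emph{unweighted} expectation under $P$.

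To decouple the density from the control increment I would use H\"older's inequality with conjugate exponents $p,q>1$:
\[
E\Big[L^{u^n}_T\int_0^T\delta^2(u^n_t,u_t)\,dt\Big]\le\big(E[(L^{u^n}_T)^p]\big)^{1/p}\Big(E\Big[\Big(\int_0^T\delta^2(u^n_t,u_t)\,dt\Big)^q\Big]\Big)^{1/q}.
\]
This is exactly where Assumption (A6) is used: by Theorem 2.2 in \cite{ugh} the boundedness of $\sigma$ and $\sigma^{-1}$ (together with the uniform growth constants entering \eqref{K}) yields a uniform bound $\sup_n E[(L^{u^n}_T)^p]\le K<\infty$ for some $p>1$, so the first factor stays bounded. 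For the second factor, compactness of $U$ makes $\delta$ bounded, say $\delta\le M$, so $X^n:=\int_0^T\delta^2(u^n_t,u_t)\,dt\le TM^2$ and hence $(X^n)^q\le (TM^2)^{q-1}X^n$; taking expectations and using Lemma \ref{ekeland}$(ii)$, which gives $E[X^n]\to 0$, shows that the second factor tends to $0$. Combining the two bounds yields $D_T^2(P^{u^n},P^u)\to 0$. The main obstacle is precisely this weighting by the $n$-dependent density, and the whole argument hinges on the uniform $L^p$-bound furnished by (A6), without which the convergence of Lemma \ref{ekeland}$(ii)$ could not be upgraded to convergence of the weighted expectation.

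For the $L^1(P)$-convergence of the densities I would identify the total-variation metric \eqref{TV-filt} with the $L^1$-distance of densities. Since $L^{u^n}$ and $L^u$ are true martingales (being Dol\'eans exponentials whose terminal values have expectation $1$), one has $L^{u^n}_t=E[L^{u^n}_T\,|\,\F_t]$, so for $A\in\F_t$,
\[
P^{u^n}(A)-P^u(A)=\int_A\big(L^{u^n}_t-L^u_t\big)\,dP,
\]
and because both densities integrate to $1$ the supremum over $A\in\F_t$ is attained on $\{L^{u^n}_t\ge L^u_t\}$, giving $D_t(P^{u^n},P^u)=E[\,|L^{u^n}_t-L^u_t|\,]$. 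Monotonicity of $D_t$ in $t$ then gives, for every fixed $t$,
\[
E\big[\,|L^{u^n}_t-L^u_t|\,\big]=D_t(P^{u^n},P^u)\le D_T(P^{u^n},P^u)\longrightarrow 0,
\]
which is the claimed $L^1(P)$-convergence of $L^{u^n}_t$ to $L^u_t$.
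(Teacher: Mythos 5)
Your proof is correct, but it departs from the paper's argument in both halves. For the total-variation convergence, the paper applies \eqref{TV-uv-1} the other way round: since $D_T$ is symmetric, it writes $D_T^2(P^{u^n},P^u)=D_T^2(P^{u},P^{u^n})\le C\,E^u\bigl[\int_0^T\delta^2(u_t,u^n_t)\,dt\bigr]$, so the Girsanov weight is the \emph{fixed} density $L^u_T$; convergence then follows from convergence in probability plus uniform integrability (the product is dominated by a constant times the integrable $L^u_T$), with no H\"older inequality and no $L^p$-bound, so (A6) plays no role in that step. Your version keeps the $n$-dependent weight $L^{u^n}_T$ and pays for it with H\"older plus the uniform bound $\sup_n E[(L^{u^n}_T)^p]<\infty$; that bound is legitimate, since it is precisely what the paper's own citation of Theorem 2.2 in \cite{ugh} under (A6) and (B3) provides (the paper invokes it for the uniform integrability of $(L^{u^n}_t)_n$), so your argument is sound, just less economical here. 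For the $L^1(P)$-convergence your route is genuinely different and in fact shorter than the paper's: you identify $D_t(P^{u^n},P^u)$ with $E[\,|L^{u^n}_t-L^u_t|\,]$ --- valid because the Dol\'eans exponentials are true $P$-martingales ($E[L^u_T]=1$), so the restrictions of $P^{u^n}$ and $P^u$ to $\F_t$ have densities $L^{u^n}_t$ and $L^u_t$ --- and then conclude from the monotonicity $D_t\le D_T$. The paper instead proves convergence in probability of the stochastic integrals $M^{u^n}_t$ and of their brackets, and upgrades to $L^1$ via the uniform integrability supplied by \cite{ugh}; your argument makes the second claim an immediate corollary of the first, avoids those stochastic-calculus estimates entirely, and needs no further appeal to (A6) beyond what part one already used.
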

\begin{proof}
In view of  Lemma \eqref{ekeland}, we have $\E[\int_0^T
\delta^2(u_t, u^n_t)dt] \to 0$. Therefore the sequence $(\int_0^T
\delta^2(u_t, u^n_t)dt)_{n\ge 0}$ converges in probability w.r.t $P$
to 0 and by compacity of $U$ it is bounded. On the other hand since
$L^u_T$ is integrable then the sequence $(L^u_T\int_0^T
\delta^2(u_t, u^n_t)dt)_{n\ge 0}$ converges also in probability wrt
to $P$ to 0. Next by the uniform boundedness of \\$(\int_0^T
\delta^2(u_t, u^n_t)dt)_{n\ge 0}$, the sequence $(L^u_T\int_0^T
\delta^2(u_t, u^n_t)dt)_{n\ge 0}$ is uniformly integrable. Finally
as we have
$$
\E^u[\int_0^T \delta^2(u_t, u^n_t)dt]=\E[L^u_T\int_0^T \delta^2(u_t,
u^n_t)dt]
$$ then
$$
\E^u[\int_0^T \delta^2(u_t, u^n_t)dt]\rightarrow_n 0.
$$
Now, to
conclude it is enough to use the inequality \eqref{TV-uv-1}.

To prove the last statement, set $M^u_t:=\int_0^t f(s,x_.,P^u\circ
x_s^{-1},u_s)dW_s$. In view of (B2), we have
$$
\begin{array}{lll}
\E[|M^{u_n}_t-M^u_t|^2]=\E[\int_0^t |f(s,x_.,P^{u^n}\circ
x_s^{-1},u^n_s)-f(s,x_.,P^u\circ x_s^{-1},u_s)|^2ds]\\ \qquad\qquad
\qquad \quad \,\,\le C(D_t(P^{u_n},P^u)+E[\int_0^T \delta^2(u^n_t,
u_t)dt],
\end{array}
$$
which converge to zero as $n\to +\infty$. \\
Furthermore,  setting $f(t,x_.,u):=f(t,x_.,P^{u}\circ
x_t^{-1},u_t)$, we have
$$
\begin{array}{lll}
\E[|\langle M^{u^n}\rangle_t-\langle M^u\rangle_t|]\le \E[\int_0^t
|f(s,x_.,u^n)-f(s,x_.,u)|(|f(s,x_.,u^n)|+|f(s,x_.,u)|)ds] \\
\qquad\qquad  \quad \le (\E[\int_0^t
|f(s,x_.,u^n)-f(s,x_.,u)|^2])^{1/2}(\E[\int_0^t
(|f(s,x_.,u^n)|+|f(s,x_.,u)|)^2ds])^{1/2} \\ \qquad\qquad  \quad \le
C(E[\int_0^t |f(s,x_.,u^n)-f(s,x_.,u)|^2])^{1/2}\E[\int_0^t
(1+|x|_s^2)ds])^{1/2}
\end{array}
$$
which converges to zero as $n\to +\infty$. Therefore, $L^{u^n}_t$
converges to $L^u_t$ in probability w.r.t. $P$. But, by Theorem 2.2 in
\cite{ugh}, under (A6) and (B3), $(L^{u^n}_t)_n$ is uniformly integrable.
Thus, $L^{u^n}_t$ converges to $L^u_t$  in $L^1(P)$ when $n\to
+\infty$.
\end{proof}
\begin{proposition}\label{near-opt}
For any $\e>0$, there exists a control $u^{\e}\in\U$ such that

\begin{equation}\label{e-opt}
J(u^{\e})\le \inf_{u\in\U} J(u)+\e.
\end{equation}
$u^{\e}$ is called  near or $\e$-optimal for the payoff functional
$J$.
\end{proposition}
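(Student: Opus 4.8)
The plan is to reduce the statement to the very definition of the infimum, so that the only thing needing verification is that $J$ is bounded below on $\U$. First I would invoke assumption (B4): $h$ and $g$ are uniformly bounded, say $|h|\le M$ and $|g|\le M$. Since each $P^u$ is a probability measure, this yields
\[
|J(u)|=\left|E^u\left[\int_0^T h(t,x_.,P^u\circ x_t^{-1},u_t)\,dt+g(x_T,P^u\circ x_T^{-1})\right]\right|\le M(T+1)
\]
uniformly in $u\in\U$. Hence $m:=\inf_{u\in\U}J(u)$ is a finite real number, bounded below by $-M(T+1)$.

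Once $m$ is finite, \eqref{e-opt} is immediate: by the definition of the infimum, for every $\e>0$ there exists $u^\e\in\U$ with $J(u^\e)<m+\e=\inf_{u\in\U}J(u)+\e$. Thus in this bare form the proposition demands nothing beyond the uniform boundedness of the payoff, and there is essentially no obstacle.

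The genuinely nontrivial ingredient --- the one that makes Ekeland's variational principle \cite{Ekeland} applicable for the refined variational version of near-optimality announced in the preamble --- is the continuity of $J$ on the complete metric space $(\U,d_E)$ (completeness being Lemma \ref{ekeland}$(i)$). To establish it I would take $d_E(u^n,u)\to 0$ and use Proposition \ref{D-conv}, which gives $D_T(P^{u^n},P^u)\to 0$ and, for each $t$, $L^{u^n}_t\to L^u_t$ in $L^1(P)$. Combining this with the Lipschitz estimate (B2) on $h$ and $g$, the marginal bound \eqref{margine}, and the uniform moment control $K_T$ of Lemma \ref{TV-u}, one passes to the limit under the expectation to obtain $J(u^n)\to J(u)$. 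The main delicacy here is precisely this limit passage: the mean-field argument $P^{u^n}\circ x_t^{-1}$ sits inside both $h$ and $g$, so one must simultaneously control the explicit dependence on the control through $\delta(u^n_t,u_t)$ and the implicit dependence through the law via \eqref{margine} and $D_T(P^{u^n},P^u)\to 0$, while the change of measure is handled through the $L^1(P)$-convergence (and uniform integrability) of the densities $L^{u^n}_t$ furnished by Proposition \ref{D-conv}. With $J$ continuous and bounded below on the complete space $(\U,d_E)$, Ekeland's principle then produces, for each $\e>0$, a control $u^\e$ satisfying not only \eqref{e-opt} but also $J(u^\e)\le J(v)+\sqrt{\e}\,d_E(u^\e,v)$ for all $v\in\U$.
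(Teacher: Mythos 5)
Your proposal is correct, but its core argument is genuinely more elementary than the paper's. You observe that by (B4) the payoff satisfies $|J(u)|\le M(T+1)$ uniformly in $u\in\U$, so $\inf_{u\in\U}J(u)$ is finite, and then \eqref{e-opt} is literally the definition of the infimum; this is a complete proof of the stated inequality, and you are right that nothing more is needed for it (indeed, Ekeland's principle itself takes an $\e$-minimizer as its \emph{input}, so it cannot be what produces one). The paper instead derives the proposition from Ekeland's variational principle \cite{Ekeland}, for which it must verify that $(\U,d_E)$ is complete (Lemma \ref{ekeland}) and that $J$ is bounded and continuous for $d_E$ --- and this continuity argument is exactly the limit passage you sketch: write $J$ under $P$ via the densities $L^u$, split the increment as $|L^{u^n}_t-L^u_t|\,|h|+L^u_t\,|h(\cdot,u^n)-h(\cdot,u)|$, and conclude by Proposition \ref{D-conv} (which gives $D_T(P^{u^n},P^u)\to 0$ and $L^{u^n}_t\to L^u_t$ in $L^1(P)$) together with (B2), \eqref{margine} and the boundedness of $h$ and $g$. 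What the heavier route buys is precisely the refinement you state at the end: the near-optimal control can be chosen to satisfy, in addition, $J(u^{\e})\le J(v)+\sqrt{\e}\,d_E(u^{\e},v)$ for all $v\in\U$, i.e.\ $u^{\e}$ is the minimizer of a Lipschitz perturbation of $J$; this variational property is the substance of the ``useful and desirable properties'' of near-optimal controls announced in the preamble, and it is the real purpose of the continuity and completeness work. In short, your elementary argument proves the proposition as literally stated, and your sketch of the Ekeland refinement is faithful to the paper's own proof.
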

\begin{proof} The result follows from Ekeland's variational principle, provided that we prove that the payoff function $J$, as a mapping from the complete metric space $(\U,d_E)$ to $\R$, is lower bounded and lower-semicontinuous. Since $f$ and $g$ are assumed uniformly bounded, $J$ is obviously bounded. We now show continuity of $J$: $J(u^n)$ converges to $J(u)$ when  $d_E(u^n,u)\to 0$. \\
Integrating by parts, we obtain
$$
J(u)=E[\int_0^T L^u_th(t,x_.,P^u\circ x_t^{-1},u_t)dt+L^u_Tg(x_T,
P^u\circ x_T^{-1})].
$$
Using the inequality
$$
|L^{u^n}_th(t,x_.,u^n)-L^{u}_th(t,x_.,u)|\le
|L^{u^n}_t-L^{u}_t|h(t,x_.,u)|+L^{u}_t|h(t,x_.,u^n)-h(t,x_.,u)|
$$
and (B3) together with the boundedness of $h$, by Proposition
\eqref{D-conv}, $E[\int_0^T L^{u^n}_th(t,x_.,P^{u^n}\circ
x_t^{-1},u^n_t)dt]$ converges to $E[\int_0^T L^u_th(t,x_.,P^u\circ
x_t^{-1},u_t)dt]$ as $d_E(u^n,u)\to 0$. A similar argument yields
convergence of $E[L^{u^n}_Tg(x_T, P^{u^n}\circ x_T^{-1})]$ to
$E[L^u_Tg(x_T, P^u\circ x_T^{-1})]$ when $d_E(u^n,u)\to 0$.
\end{proof}


\section{The zero-sum game problem}\label{zero-sum}
In this section we consider a two-players zero-sum game.  Let $\U$ (resp. $\V$) be the set of admissible $U$-valued (resp. $V$-valued) control strategies for the first (resp. second) player, where $(U,\delta_1)$ and $(V,\delta_2)$ are compact metric spaces.\\

\noindent  For $(u,v),(\bar u,\bar v)\in U\times V$, we set
\begin{equation}\label{delta-u-v}
\delta((u,v),(\bar u,\bar v)):=\delta_1(u,\bar u)+\delta_2(v,\bar
v).
\end{equation}
The distance $\delta$ defines a metric on the compact space $U\times V$. \\

\noindent Let $f$ and $h$ be two measurable functions from $[0,T]\times\Om\times\mathcal{P}_2(\R^d)\times U\times V$ into $\R^d$ and $\R$, respectively,  and $g$ be a measurable function from $\R^d\times\mathcal{P}_2(\R^d)$ into $\R$ such that \\

\begin{itemize}
 \item[(C1)] For any $(u,v)\in\U\times\V$ and $Q\in \mathcal{P}_2(\Om)$, the processes $(f(t, x_.,Q\circ x_t^{-1},u_t,v_t))_t$ and $(h(t, x_.,Q\circ x_t^{-1},u_t,v_t))_t$ are progressively measurable.  Moreover, $g(x_T,Q\circ x_T^{-1})$ is $\mathcal{F}_T$-measurable.

 \item[(C2)] For every $t\in[0,T]$, $w\in\Om$, $(u,v),(\bar u,\bar v) \in U\times V$ and $\mu, \nu \in\mathcal{P}(\R^d)$,
  $$
  |\phi(t,w,\mu, u,v)-\phi(t,w,\nu,\bar u,\bar v)|\le C(d(\mu,\nu)+\delta((u,v),(\bar u,\bar v)),
  $$
  for $\phi\in\{f,h,g\}$.
 \item[(C3)]  For every $t\in[0,T]$, $w\in\Om,\,\mu\in\mathcal{P}(\R^d)$ and  $(u,v)\in\U\times\V$,
$$
 |f(t,w,\mu,u,v)|\le C(1+|w|_t).
 $$
\item[(C4)] $h$ and $g$ are uniformly  bounded.
 \end{itemize}
 \medskip

\noindent For $(u,v)\in\U\times \V$, let $P^{u,v}$ be the
probability measure on $(\Om,\F)$ defined by
 \begin{equation}\label{P-u-v}
 dP^{u,v}:=L_T^{u,v}dP,
 \end{equation}
where
\begin{equation}\label{P-u-v-density}
 L_t^{u,v}:=\mathcal{E}_t\left(\int_0^{\cdot} \sigma^{-1}(s,x_.)f(s,x_.,P^{u,v}\circ x_s^{-1},u_s,v_s)dW_s\right),\quad 0\le t\le T.
 \end{equation} The proof of existence of $P^{u,v}$ follows the same lines as the one of $P^u$ defined in (\ref{P-u})-(\ref{P-u-density}).
Hence, by Girsanov's theorem, the process $(W^{u,v}_t,\,\, 0\le t\le
T)$ defined by
 $$
 W_t^{u,v}:=W_t-\int_0^t \sigma^{-1}(s,x_.)f(s,x_.,P^{u,v}\circ x_s^{-1},u_s,v_s)ds, \quad 0\le t\le T,
 $$
 is an $(\Ff, P^{u,v})$-Brownian motion.  Moreover, under $P^{u,v}$,
 \begin{equation}\label{SDE-u-v}
 dx_t=f(t,x_.,P^{u,v}\circ x_t^{-1},u_t,v_t)dt+\sigma(t,x_.)dW^{u,v}_t,\quad x_0=\xi\in \R^d.
 \end{equation}
Let $E^{u,v}$  denote the expectation w.r.t. $P^{u,v}$.\\
\noindent The payoff functional $J(u,v),\,(u,v)\in\U\times\V$,
associated with the controlled SDE (\ref{SDE-u-v}) is
\begin{equation}\label{J-u-v}
J(u,v):=E^{u,v}\left[\int_0^T h(t,x_.,P^{u,v}\circ
x_t^{-1},u_t,v_t)dt+ g(x_T,P^{u,v}\circ x^{-1}_T)\right].
\end{equation}
  \\
The zero-sum game we consider is between two players, where the
first player (with control $u$) wants to minimize the payoff
(\ref{J-u-v}), while  the second player (with control $v$) wants to
maximize it.  The zero-sum game boils down to showing existence of a
saddle-point for the game i.e.  to show existence of a pair $(u^*,
v^*)$ of strategies such that
\begin{equation}\label{J-u-v-hat}
J(u^*, v) \le J(u^*, v^*)\le J(u,v^*)
\end{equation}
for each $(u, v)\in\U\times\V$.\\
 The corresponding dynamics is given by the probability measure $P^*$ on $(\Om,\F)$ defined by
\begin{equation}\label{opt-P}
dP^*=\mathcal{E}_T\left(\int_0^{\cdot} \sigma^{-1}(s,x_.)f(s,x_.,
P^*\circ x_s^{-1},u^*_s, v^*_s)dW_s\right)dP
\end{equation}
under which
\begin{equation}\label{x-opt}
dx_t=f(t,x, P^*\circ x_t^{-1},u^*_t,v^*_t)dt+\sigma(t,x)dW^{u^*,
v^*}_t,\quad x_0=\xi\in \R^d.
\end{equation}

\medskip

For $(u,v)\in\U\times\V$ and $z\in\R^d$, we introduce the
Hamiltonian associated with the game  (\ref{SDE-u-v})-(\ref{J-u-v}):
\begin{equation}\label{ham-u-v}\begin{array}{lll}
H(t,x_.,z,u,v):=z\cdot\sigma^{-1}(t,x_.)f(t,x_.,P^{u,v}\circ x_t^{-1},u_t,v_t)\\
\qquad\qquad\qquad\qquad \qquad\qquad\qquad \qquad\qquad+
h(t,x_.,P^{u,v}\circ x_t^{-1},u_t,v_t).
\end{array}
\end{equation}
Next, set
\begin{itemize}
\item $\underline{H}(t,x_.,z):=\underset{v\in\V}{\mathrm{ess}\sup}\, \underset{u\in\U}{\mathrm{ess}\inf}\, H(t,x_.,z,u,v),$
\item $\overline{H}(t,x_.,z):=\underset{u\in\U}{\mathrm{ess}\inf}\, \underset{v\in\V}{\mathrm{ess}\sup}\, H(t,x_.,z,u,v),$
\item $\underline{g}(x_.):=\underset{v\in\V}{\mathrm{ess}\sup}\, \underset{u\in\U}{\mathrm{ess}\inf}\, g(x_T, P^{u,v}\circ x_T^{-1})$,
\item $\overline{g}(x_.):=\underset{u\in\U}{\mathrm{ess}\inf}\, \underset{v\in\V}{\mathrm{ess}\sup}\, g(x_T, P^{u,v}\circ x_T^{-1})$.
\end{itemize}
As in Proposition \ref{ess-inf}, $\underline H$, $\overline H$, $\underline g$ and $\overline g$ exist. On the other hand following a similar proof as the one leading to \eqref{H*-lipwaw}, $\underline{H}(t,x_.,z)$ and $\overline{H}(t,x_.,z)$ are stochastic Lipschitz continuous in $z$ with the Lipschitz constant $C(1+|x|^{1+\alpha}_t)$. \\

\noindent Let $(\underline{Y},\underline{Z})$ be the solution of the
BSDE associated with $(\underline{H}, \underline{g})$ and
$(\overline{Y},\overline{Z})$ the solution of the BSDE associated
with $(\overline{H}, \overline{g})$.

\begin{definition}[Isaacs' condition]
We say that the Isaacs' condition holds for the game if
$$
\left\{\begin{array}{lll}
\underline{H}(t,x_.,z)=\overline{H}(t,x_.,z),\quad  z\in\R^d,\,\, 0\le t\le T, \\
\underline{g}(x_.)=\overline{g}(x_.),
\end{array}
\right.
$$
\end{definition}

\noindent Applying the comparison theorem for BSDEs and then
uniqueness of the solution, we obtain the following

\begin{proposition}\label{game-comparison} For every $t\in[0,T]$, it holds that $\underline{Y}_t\le \overline{Y}_t$, $\,P$-a.s. Moreover, if the Issac's condition holds, then
\begin{equation}\label{nash}
\underline{Y}_t=\overline{Y}_t:=Y_t,\quad P\mbox{-a.s.},\quad 0\le
t\le T.
\end{equation}
\end{proposition}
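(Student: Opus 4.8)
The plan is to reduce both assertions to two standard facts about BSDEs, namely the comparison theorem and uniqueness of solutions, once the drivers and terminal data of the two BSDEs have been ordered. The only genuinely new ingredient is the elementary minimax inequality, stated for essential extrema of families of random variables rather than for ordinary suprema and infima.

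First I would prove that, for each fixed $z\in\R^d$, one has $\underline{H}(t,x_.,z)\le\overline{H}(t,x_.,z)$, $dP\times dt$-a.e., and $\underline{g}(x_.)\le\overline{g}(x_.)$, $P$-a.s. Both are instances of the generic inequality $\mathrm{ess}\sup_{v}\,\mathrm{ess}\inf_{u}\,\Psi(u,v)\le\mathrm{ess}\inf_{u}\,\mathrm{ess}\sup_{v}\,\Psi(u,v)$, applied to $\Psi(u,v)=H(t,x_.,z,u,v)$ and to $\Psi(u,v)=g(x_T,P^{u,v}\circ x_T^{-1})$. To establish it, fix admissible $u_0\in\U$ and $v_0\in\V$; from $\mathrm{ess}\inf_{u}\Psi(u,v_0)\le\Psi(u_0,v_0)\le\mathrm{ess}\sup_{v}\Psi(u_0,v)$ one obtains, for every $u_0,v_0$, that $\mathrm{ess}\inf_{u}\Psi(u,v_0)\le\mathrm{ess}\sup_{v}\Psi(u_0,v)$ a.e. Since the right-hand side does not depend on $v_0$, taking the essential supremum over $v_0$ (the least upper bound in the a.e.\ order) preserves the inequality; since the resulting left-hand side does not depend on $u_0$, taking the essential infimum over $u_0$ preserves it again, yielding the claim. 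This uses only the lattice structure of essential extrema.

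Next I would invoke the comparison theorem for BSDEs with stochastic Lipschitz drivers, as in \cite{Ham-Lepl95}. The processes $(\underline{Y},\underline{Z})$ and $(\overline{Y},\overline{Z})$ solve BSDEs whose drivers $\underline{H}(t,x_.,\cdot)$ and $\overline{H}(t,x_.,\cdot)$ do not depend on the $Y$-variable, are stochastic Lipschitz in $z$ with constant $C(1+|x|_t^{1+\alpha})$ (as noted before the statement), and satisfy $\underline{H}\le\overline{H}$ for every $z$, while their terminal values $\underline{g}\le\overline{g}$ are bounded. Hence all hypotheses of the comparison theorem are met and it delivers $\underline{Y}_t\le\overline{Y}_t$ for every $t\in[0,T]$, $P$-a.s. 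Finally, under Isaacs' condition the drivers coincide, $\underline{H}=\overline{H}$, and so do the terminal data, $\underline{g}=\overline{g}$; thus $(\underline{Y},\underline{Z})$ and $(\overline{Y},\overline{Z})$ solve exactly the same BSDE, and uniqueness of its solution in $\cs^2_T\times\ch^2_T$ forces $\underline{Y}_t=\overline{Y}_t=:Y_t$ for all $t$, which is \eqref{nash}. The main point requiring care is the measurability and lattice bookkeeping behind the minimax inequality for essential extrema; once that is settled, the comparison and uniqueness theorems for BSDEs finish the argument with no further work.
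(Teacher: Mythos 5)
Your proposal is correct and follows exactly the route the paper intends: the paper's entire ``proof'' is the one-line remark preceding the statement (``Applying the comparison theorem for BSDEs and then uniqueness of the solution''), and you carry out precisely that plan --- ordering the drivers and terminal data via the minimax inequality for essential extrema, invoking comparison for BSDEs with stochastic Lipschitz drivers, and concluding equality under Isaacs' condition by uniqueness. Your explicit treatment of the $\mathrm{ess}\sup$--$\mathrm{ess}\inf$ inequality is a detail the paper leaves implicit, but it is the natural filling-in, not a different argument.
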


In the next theorem, we formulate conditions for which the zero-sum
game has a value.  For $(u,v)\in\U\times \V$, let
$(Y^{u,v},Z^{u,v})\in {\cs}^2_T\times {\ch}^2_T$ be the solution of
the BSDE
\begin{equation}\label{u-v-yz-bsde}\left\{\begin{array}{ll}
-dY^{u,v}_t=H(t,x_.,Z^{u,v}_t,u,v) dt-Z^{u,v}_tdW_t,\quad 0\le t<T,\\
Y^{u,v}_T=g(x_T,P^{u,v}\circ x_T^{-1}),
\end{array}
\right.
\end{equation}

\begin{theorem}[Existence of a value of the zero-sum game]\label{value-game}
Assume that, for every $t\in[0,T]$,
\begin{equation}\label{hypothz} \underline{H}(t,x_.,\underline Z_t)=\overline{H}(t,x_.,\underline Z_t).
\end{equation}
If there exists $(u^*,v^*)\in\U\times\V$ such that, for every $0\le
t<T$,
\begin{equation}\label{sp-H}
\underline{H}(t,x_.,\underline Z_t)=\underset{u\in
\U}{\mathrm{ess}\inf}\, H(t,x_.,\underline Z_t,u,v^*)=\underset{v\in
\V}{\mathrm{ess}\sup}\, H(t,x_.,\underline Z_t,u^*,v),
\end{equation}
and
\begin{equation}\label{sp-g}
\underline{g}(x_.)=\overline{g}(x_.)=\underset{u\in
\U}{\mathrm{ess}\inf}\, g(x_T,P^{u,v^*}\circ
x_T^{-1})=\underset{v\in \V}{\mathrm{ess}\sup}\, \,
g(x_T,P^{u^*,v}\circ x_T^{-1}).
\end{equation}
Then,
\begin{equation}\label{value}
Y_t=\underset{u\in \U}{\mathrm{ess}\inf}\,\underset{v\in
\V}{\mathrm{ess}\sup}Y_t^{u,v}=\underset{v\in
\V}{\mathrm{ess}\sup}\,\underset{u\in
\U}{\mathrm{ess}\inf}\,Y_t^{u,v}, \quad 0\le t\le T.
\end{equation}
Moreover, the pair $(u^*,v^*)$is a saddle-point for the game.
\end{theorem}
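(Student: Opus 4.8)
The plan is to collapse the two BSDEs defining $\underline Y$ and $\overline Y$ into a single one that produces the value process, and then to extract the two saddle inequalities from the comparison theorem for stochastic Lipschitz BSDEs, always arranging the comparison so that the generator inequality is tested along the value process $\underline Z$, which is the only place where the saddle identity \eqref{sp-H} is assumed to hold.

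\emph{Step 1: the value process.} Since $(\underline Y,\underline Z)$ solves the BSDE with driver $\underline H$ and terminal value $\underline g$, I use hypothesis \eqref{hypothz}, namely $\underline H(t,x_.,\underline Z_t)=\overline H(t,x_.,\underline Z_t)$, together with $\underline g=\overline g$ from \eqref{sp-g}, to rewrite
$$
\underline Y_t=\overline g(x_.)+\int_t^T\overline H(s,x_.,\underline Z_s)\,ds-\int_t^T\underline Z_s\,dW_s,
$$
so that $(\underline Y,\underline Z)$ is also a solution of the BSDE associated with $(\overline H,\overline g)$. As $\overline H$ is stochastic Lipschitz in $z$, uniqueness of the solution (Theorem I-3 in \cite{Ham-Lepl95}) forces $(\underline Y,\underline Z)=(\overline Y,\overline Z)=:(Y,Z)$, the announced value process; in particular $Z=\underline Z$, so the saddle identity \eqref{sp-H} holds along $Z$.

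\emph{Step 2: the two saddle inequalities.} I would next establish, for all $t\in[0,T]$ and all $(u,v)\in\U\times\V$,
$$
Y^{u^*,v}_t\le Y_t\le Y^{u,v^*}_t.
$$
For the left inequality I compare $(Y,Z)$ with $(Y^{u^*,v},Z^{u^*,v})$ using the form of the comparison theorem in which the generator inequality is tested along $(Y,Z)$: by \eqref{sp-H}, $\underline H(t,x_.,Z_t)=\mathrm{ess}\sup_{v'\in\V}H(t,x_.,Z_t,u^*,v')\ge H(t,x_.,Z_t,u^*,v)$, while \eqref{sp-g} gives $\underline g\ge g(x_T,P^{u^*,v}\circ x_T^{-1})$; since $H(\cdot,\cdot,\cdot,u^*,v)$ is stochastic Lipschitz, comparison yields $Y_t\ge Y^{u^*,v}_t$. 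For the right inequality the generator test is again at $(Y,Z)$: by \eqref{sp-H}, $\underline H(t,x_.,Z_t)=\mathrm{ess}\inf_{u'\in\U}H(t,x_.,Z_t,u',v^*)\le H(t,x_.,Z_t,u,v^*)$, and \eqref{sp-g} gives $\underline g\le g(x_T,P^{u,v^*}\circ x_T^{-1})$; since $H(\cdot,\cdot,\cdot,u,v^*)$ is stochastic Lipschitz, comparison yields $Y_t\le Y^{u,v^*}_t$. The crucial point, and the main obstacle, is precisely that \eqref{sp-H} is assumed only along the process $\underline Z_t=Z_t$ and not for every $z\in\R^d$; this is why in each inequality I must select the version of the comparison theorem whose generator test takes place along $Z$ rather than along $Z^{u^*,v}$ or $Z^{u,v^*}$.

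\emph{Step 3: value of the game and saddle-point.} Putting $v=v^*$ in the left inequality and $u=u^*$ in the right one immediately gives $Y_t=Y^{u^*,v^*}_t$. To deduce \eqref{value}, set $\overline V_t:=\mathrm{ess}\inf_{u}\mathrm{ess}\sup_{v}Y^{u,v}_t$ and $\underline V_t:=\mathrm{ess}\sup_{v}\mathrm{ess}\inf_{u}Y^{u,v}_t$. From $Y^{u^*,v}_t\le Y_t$ for all $v$ and the choice $u=u^*$ in the upper value, $\overline V_t\le\mathrm{ess}\sup_{v}Y^{u^*,v}_t\le Y_t$; from $Y_t\le Y^{u,v^*}_t$ for all $u$ and the choice $v=v^*$ in the lower value, $Y_t\le\mathrm{ess}\inf_{u}Y^{u,v^*}_t\le\underline V_t$. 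Combined with the trivial bound $\underline V_t\le\overline V_t$, these squeeze $\underline V_t=\overline V_t=Y_t$, which is \eqref{value}. Finally, exactly as in Proposition \ref{u-bsde} one has $Y^{u,v}_0=J(u,v)$, so evaluating the inequalities of Step 2 at $t=0$ gives $J(u^*,v)\le J(u^*,v^*)\le J(u,v^*)$ for every $(u,v)$, i.e.\@ $(u^*,v^*)$ is a saddle-point for the game.
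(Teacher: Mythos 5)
Your proposal is correct, and it follows the paper's overall architecture: your Step 1 (identifying $(\underline Y,\underline Z)=(\overline Y,\overline Z)$ via \eqref{hypothz}, $\underline g=\overline g$ and uniqueness for stochastic-Lipschitz BSDEs) is exactly the paper's first step, and your Step 3 squeeze is the standard minimax argument the paper leaves implicit. Where you genuinely diverge is Step 2. The paper does not compare $(Y,Z)$ with $(Y^{u^*,v},Z^{u^*,v})$ directly; instead it introduces the two ``best-response'' BSDEs, with drivers $\mathrm{ess}\sup_{v}H(t,x_.,z,u^*,v)$ and $\mathrm{ess}\inf_{u}H(t,x_.,z,u,v^*)$, calls their solutions $\widehat Y^{u^*}$ and $\widetilde Y^{v^*}$, observes that by \eqref{sp-H}--\eqref{sp-g} the pair $(\underline Y,\underline Z)$ solves each of these BSDEs exactly, hence $\underline Y=\widehat Y^{u^*}=\widetilde Y^{v^*}$ by uniqueness, and only then compares $\widehat Y^{u^*}$ with $Y^{u^*,v}$ (and $\widetilde Y^{v^*}$ with $Y^{u,v^*}$) --- a comparison in which the drivers are ordered at \emph{every} $z$, so no care is needed about where the generator test occurs. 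You instead absorb the difficulty into the one-sided form of the comparison theorem, testing the generator inequality along the value process's $Z$ and using linearity (stochastic Lipschitz continuity) in $z$ of $H(\cdot,\cdot,\cdot,u^*,v)$ and $H(\cdot,\cdot,\cdot,u,v^*)$ to run the change-of-measure argument; this is legitimate and is in fact the very technique the paper uses in its earlier comparison and $\e$-optimality propositions for the control problem. Your route is shorter and avoids auxiliary objects, at the cost of leaning on the precise asymmetric statement of the comparison theorem (generator test along one prescribed solution, Lipschitz hypothesis on the other driver); the paper's detour through uniqueness keeps every comparison in its most standard form and makes explicit the structural fact that the value process coincides with each player's best-response value. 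Both proofs correctly exploit the crucial point you flag: \eqref{sp-H} is only assumed along $\underline Z_t$, not for all $z$.
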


\begin{proof} First note that we can replace in (\ref{hypothz}) $\underline Z$ by $\overline Z$ and the result still holds. So assume that $\underline{H}(t,x_.,\underline Z_t)=\overline{H}(t,x_.,\underline Z_t)$. Then by the uniqueness of the solution of the BSDEs associated with $(\underline H,\underline g)$ and
$(\overline H,\overline g)$ we have $(\underline{Y},\underline
Z)=\overline{Y},\overline Z)$.
\medskip

On the other hand, by (\ref{sp-H})-(\ref{sp-g})  one can easily
check that the paire $(u^*,v^*)$ satisfies a saddle-point property
for $H$ and $g$ as well, i.e.,
$$
H(t,x_.,\underline Z_t,u^*,v)\leq \underline{H}(t,x_.,\underline
Z_t)={H}(t,x_.,\underline Z_t,u^*,v^*)\leq H(t,x_.,\underline
Z_t,u,v^*), t<T
$$
and
$$
g(x_T,P^{u^*,v}\circ x_T^{-1})\leq
\underline{g}(x_.)=\overline{g}(x_.)=g(x_T,P^{u^*,v^*}\circ
x_T^{-1})\leq  g(x_T,P^{u,v^*}\circ x_T^{-1}).
$$
The previous equalities and the uniquess of the solutions of the
BSDEs imply that $\overline Y_t=\underline Y_t=Y^{u^*,v^*}_t$.

Now let $(u,v)\in\U\times \V$ and, $(\widehat Y^u,\widehat Z^u)$,
$(\widetilde Y^v,\widetilde Z^v)$ be the solutions of the following
BSDEs:
\begin{equation}\label{u-v-yz-bsde}\left\{\begin{array}{ll}
-d\widehat Y^{u}_t=\underset{v\in \V}{\mathrm{ess}\sup}\,H(t,x_.,\widehat Z^{u}_t,u,v) dt-\widehat Z^{u}_tdW_t,\quad 0\le t<T,\\
\widehat Y^{u}_T=\underset{v\in
\V}{\mathrm{ess}\sup}\,g(x_T,P^{u,v}\circ x_T^{-1}),
\end{array}
\right.
\end{equation}
\begin{equation}\label{u-v-yz-bsde}\left\{\begin{array}{ll}
-d\widetilde Y^{v}_t=\underset{u\in \U}{\mathrm{ess}\inf}\, H(t,x_.,\widetilde Z^{v}_t,u,v) dt-\widetilde Z^{v}_tdW_t,\quad 0\le t<T,\\
\widetilde Y^{v}_T=\underset{u\in
\U}{\mathrm{ess}\inf}\,g(x_T,P^{u,v}\circ x_T^{-1}).
\end{array}
\right.
\end{equation}
Then by comparison we have \begin{equation}\label{*-*} \hat
Y^{u^*}_t\geq Y^{u^*,v}_t \mbox{ and }\tilde  Y^{v^*}_t\leq
Y^{u,v^*}_t.
\end{equation} But $\hat  Y^{u^*}$ satisfies the following BSDE:
\begin{equation}\label{u-v-yz-bsde1}\left\{\begin{array}{ll}
-d\widehat Y^{u^*}_t=\underset{v\in \V}{\mathrm{ess}\sup}\,H(t,x_.,\widehat Z^{u^*}_t,u^*,v) dt-\widehat Z^{u^*}_tdW_t,\quad 0\le t<T,\\
\widehat Y^{u^*}_T=\underset{v\in
\V}{\mathrm{ess}\sup}\,g(x_T,P^{u^*,v}\circ x_T^{-1}).
\end{array}
\right.
\end{equation} Taking into account of (\ref{sp-H})-(\ref{sp-g}) and since the solution of the previous BSDE is unique, we obtain that
$$
\underline Y_t=Y^{u^*,v^*}_t=\hat Y^{u^*}_t.
$$
Moreover, (\ref{*-*}) implies that $Y^{u^*,v^*}_t\geq Y^{u^*,v}_t$
for any $v \in \V$.  But in the same way we have also $\underline
Y_t=Y^{u^*,v^*}_t=\tilde Y^{v^*}_t\leq Y^{u,v^*}_t$, $P$-a.s., for
any $u\in \U$. Therefore,
$$
Y^{u^*, v}_t\le Y^{u^*, v^*}_t\le Y^{u, v^*}_t.
$$
Thus, $(u^*,v^*)$ is a saddle-point of the game and $\underline
Y_t=Y^{u^*, v^*}_t$ is the value of the game, i.e., it satisfies
$$
Y^{u^*, v^*}_t=Y_t=\underset{u\in
\U}{\mathrm{ess}\inf}\,\underset{v\in
\V}{\mathrm{ess}\sup}Y_t^{u,v}=\underset{v\in
\V}{\mathrm{ess}\sup}\,\underset{u\in
\U}{\mathrm{ess}\inf}\,Y_t^{u,v}, \quad 0\le t\le T.
$$
\end{proof}

\noindent {\bf Final remark} Assumptions (B4) and (C4) on the
boundedness of the functions $g$ and $h$  can be substantially
weakened by using subtle arguments on existence and uniqueness of
solutions of one dimensional BSDEs which are by now well known in
the BSDEs literature.

\begin{bibdiv}
\begin{biblist}

\bib{Benes}{article}{
   author={Bene{\v{s}}, V. E.},
   title={Existence of optimal stochastic control laws},
   journal={SIAM J. Control},
   volume={9},
   date={1971},
   pages={446--472},
}

\bib{Bensoussan-MF}{book}{
    author={Bensoussan, Alain},
    author={Frehse, Jens},
    author={Yam, Phillip},
    title={Mean field games and mean field type control theory},
   volume={101},
  year={2013},
  publisher={Springer},
}

\bib{Car-Lacker}{article}{
author={Carmona, Ren{\'e} }, author={Lacker, Daniel},
  title={A probabilistic weak formulation of mean field games and applications},
  journal={The Annals of Applied Probability},
  volume={25},
  number={3},
  date={2015},
  pages={1189--1231},
}
\bib{Ekeland}{article}{
author={Ekeland, Ivar}, title={ On the variational principle},
journal={J. Math. Anal. Appl.},
   volume={47},
   date={1974},
   pages={324--353},

 }

\bib{Elkar-Ham}{article}{
   author={El-Karoui, N.},
   author={Hamad{\`e}ne, S.},
   title={BSDEs and risk-sensitive control, zero-sum and nonzero-sum game
   problems of stochastic functional differential equations},
   journal={Stochastic Process. Appl.},
   volume={107},
   date={2003},
   number={1},
   pages={145--169},
}

\bib{Elkar-peng97}{article}{
   author={El-Karoui,  Nicole},
   author={Peng, Shige},
   author={Quenez, Marie Claire},
   title={Backward stochastic differential equations in finance},
  journal={Mathematical finance},
  volume={7},
  number={1},
  pages={1--71},
  year={1997},

}

\bib{Elkar-Ham}{article}{
   author={El-Karoui, N.},
   author={Hamad{\`e}ne, S.},
   title={BSDEs and risk-sensitive control, zero-sum and nonzero-sum game
   problems of stochastic functional differential equations},
   journal={Stochastic Process. Appl.},
   volume={107},
   date={2003},
   number={1},
   pages={145--169},
}

\bib{Elliott}{article}{
   author={Elliott, R. J.},
   author={Kohlmann, M.},
   title={The variational principle and stochastic optimal control},
   journal={Stochastics},
   volume={3},
   date={1980},
   pages={229--241},
}
\bib{Ham-Lepl95}{article}{
   author={Hamad{\`e}ne, S.},
   author={Lepeltier, J. P.},
   title={Backward equations, stochastic control and zero-sum stochastic
   differential games},
   journal={Stochastics Stochastics Rep.},
   volume={54},
   date={1995},
   number={3-4},
   pages={221--231},
}
\bib{ugh}{book}{
   author={Haussmann, U. G.}
      title={A stochastic maximum principle for optimal control of diffusions},
   series={John Wiley $\&$ Sons, Inc..},
 date={1986}
   }

\bib{Jacod-shir}{book}{
   author={Jacod, Jean},
   author={Shiryaev, Albert N.},
   title={Limit theorems for stochastic processes},
   series={Grundlehren der Mathematischen Wissenschaften [Fundamental
   Principles of Mathematical Sciences]},
   volume={288},
   edition={2},
   publisher={Springer-Verlag, Berlin},
   date={2003},
   }

\bib{Karatzas-Shreve}{book}{
   author={Karatzas, Ioannis},
   author={Shreve, Steven},
   title={Brownian motion and stochastic calculus},
     volume={113},
   edition={2},
   publisher={Springer Science \& Business Media},
   date={2012},
   }

   \bib{limin}{article}{
   author={Li, J.},
   author={Min, H.},
   title={Weak Solutions of Mean-Field Stochastic Differential Equations and Application to Zero-Sum Stochastic Differential Games.},
   journal={SIAM Journal on Control and Optimization}
   date={2016},
   number={54(3)},
   pages={1826-1858.}}

   \bib{pardoux-peng}{article}{
   author={Pardoux, E.},
   author={Peng, S.},
   title={Adapted solution of a backward stochastic differential equation.},
   journal={Systems \& Control Letters}
   date={1990},
   number={14(1)},
   pages={55-61}}

\end{biblist}
\end{bibdiv}

\end{document}